\newtheorem{theorem}{Theorem}[section]
\newtheorem{lemma}[theorem]{Lemma}
\newtheorem{proposition}[theorem]{Proposition}
\newtheorem{corollary}[theorem]{Corollary}
\theoremstyle{definition}
\theoremstyle{remark}
\newtheorem{remark}[theorem]{Remark}
\newtheorem{example}[theorem]{Example}
\newtheorem{algorithm2}[theorem]{Algorithm}
\begin{document}
	
\title{On the enumeration of the set of numerical semigroups with fixed Frobenius number and fixed number of second kind gaps}

\author{Aureliano M. Robles-P\'erez\thanks{Both authors are supported by the project MTM2017-84890-P (funded by Mi\-nis\-terio de Econom\'{\i}a, Industria y Competitividad and Fondo Europeo de Desarrollo Regional FEDER) and by the Junta de Andaluc\'{\i}a Grant Number FQM-343.} \thanks{Departamento de Matem\'atica Aplicada \& Instituto de Matem\'aticas (IMAG), Universidad de Granada, 18071-Granada, Spain. \newline E-mail: {\bf arobles@ugr.es} (\textit{corresponding author}); ORCID: \textbf{0000-0003-2596-1249}.}
	\mbox{ and} Jos\'e Carlos Rosales$^*$\thanks{Departamento de \'Algebra \& Instituto de Matem\'aticas (IMAG), Universidad de Granada, 18071-Granada, Spain. \newline E-mail: {\bf jrosales@ugr.es}; ORCID: \textbf{0000-0003-3353-4335}.} }

\date{ }

\maketitle

\begin{abstract} 
	We study how certain invariants of numerical semigroups relate to the number of second kind gaps. Furthermore, given two fixed non-negative integers $F$ and $k$, we provide an algorithm to compute all the numerical semigroups whose Frobenius number is $F$ and which have exactly $k$ second kind gaps.
\end{abstract}
\noindent {\bf Keywords:} Irreducible numerical semigroups, second kind gaps, Frobenius number, trees, Wilf's conjecture.

\medskip

\noindent{\it 2010 AMS Classification:} 20M14, 11Y16.	

\section{Introduction}

If $a_1,\ldots,a_e$ are positive integers such that $\gcd(a_1,\ldots,a_e)=1$, then a classical problem in additive number theory is the Frobenius problem: what is the greatest integer $F$ which is not an element of the set $a_1{\mathbb N}+\ldots+a_e{\mathbb N}$? Although this problem is solved when $e=2$ (see \cite{sylvester}), it is well known that it is not possible to find a polynomial formula in order to compute $F$ if $e\geq3$ (see \cite{curtis}). Therefore, many efforts have been made to obtain partial results or to develop algorithms to get the answer of this question (for instance, see \cite{alfonsin}).

Before continuing, let us recall some definitions and notations used in numerical semigroups.

Let ${\mathbb Z}$ and ${\mathbb N}$ be the set of integers and the set of non-negative integers respectively. A \emph{numerical semigroup} is a subset $S$ of ${\mathbb N}$ which is closed under addition, $0 \in S$, and ${\mathbb N} \setminus S$ is finite.

The elements of ${\mathbb N}\setminus S$ are called the \emph{gaps} of $S$, and its cardinality, denoted by $\mathrm{g}(S)$, is the \emph{genus} of $S$.

The \emph{Frobenius number} of $S$, denoted by $\mathrm{F}(S)$, is the greatest integer that does not belong to $S$.

The \emph{conductor} of $S$, denoted by $\mathrm{c}(S)$, is the least integer $c$ such that $c+n\in S$ for all $n\in\mathbb{N}$. Note that $\mathrm{c}(S)\in S$ and $\mathrm{c}(S) = \mathrm{F}(S)+1$.

The \emph{pseudo-Frobenius numbers} of $S$ are the elements of the set $\mathrm{PF}(S)= \big\{x\in {\mathbb Z} \setminus S \mid x+s\in S \mbox{ for all } s\in S \setminus \{0\} \big\}$ (see \cite{JPAA}). Moreover, the cardinality of $\mathrm{PF}(S)$, denoted by $\mathrm{t}(S)$, is the \emph{type} of $S$ (see \cite{froberg}).

We denote by $\mathrm{N}(S)=\{s\in S \mid s<\mathrm{F}(S) \}$ (whose elements are known as \emph{small elements} of $S$). It is clear that the sets $\mathrm{H}(S)=\{\mathrm{F}(S)-s\mid s\in\mathrm{N}(S) \}$ and $\mathrm{L}(S)=\{x \in {\mathbb N}\setminus S \mid \mathrm{F}(S)-x\in{\mathbb N}\setminus S\}$ (equivalently, $\mathrm{L}(S)=\{x \in {\mathbb N}\setminus S \mid \mathrm{F}(S)-x\notin\mathrm{N}(S) \}$) are subsets of ${\mathbb N}\setminus S$ and, in addition, ${\mathbb N}= S \mathbin{\mathaccent\cdot\cup} \mathrm{H}(S) \mathbin{\mathaccent\cdot\cup} \mathrm{L}(S)$ (that is, $S$, $H(S)$, and $L(S)$ define a partition of $\mathbb N$). Following the notation in \cite{jager} (see also \cite{barucci-froberg}), the elements of $\mathrm{H}(S)$ and $\mathrm{L}(S)$ are the \emph{first and second kind gaps} of $S$, respectively. Moreover, we denote by $\mathrm{n}(S)$ and $\mathrm{l}(S)$ the cardinality of $\mathrm{N}(S)$ (or $\mathrm{H}(S)$) and $\mathrm{L}(S)$, respectively.

Now, let $A$ be a non-empty subset of ${\mathbb N}$. Then we denote by $\langle A \rangle$ the submonoid of $({\mathbb N},+)$ generated by $A$, that is,
	\[\langle A \rangle=\big\{\lambda_1a_1+\cdots+\lambda_na_n \mid n\in{\mathbb N}\setminus \{0\}, \ a_1,\ldots,a_n\in A, \ \lambda_1,\ldots,\lambda_n\in {\mathbb N}\big\}.\]
It is well known that $\langle A \rangle$ is a numerical semigroup if and only if $\gcd(A)=1$. On the other hand, if $S$ is a numerical semigroup and $S=\langle A \rangle$, then we say that $A$ is a \emph{system of generators} of $S$. In addition, if $S\not=\langle B \rangle$ for any subset $B\subsetneq A$, then we say that $A$ is a \emph{minimal system of generators} of $S$. In \cite{springer} it is shown that each numerical semigroup has a unique minimal system of generators and that such a system is finite. We denote by $\mathrm{msg}(S)$ the minimal system of generators of $S$ and by $\mathrm{e}(S)$ the cardinality of $\mathrm{msg}(S)$, that is called the \emph{embedding dimension} of $S$.

Let $l\in {\mathbb N}$. We say that a numerical semigroup $S$ is an $l$-semigroup if $\mathrm{l}(S)=l$. Our main purpose in this work is to give an algorithm which enables us to build all the $l$-semigroups with a fixed Frobenius number.

Let us summarize the content of this work. In Section~\ref{sect-01-ns} we show that the concepts of $0$-semigroup and $1$-semigroup coincide with the concepts of symmetric numerical semigroup and pseudo-symmetric numerical semigroup, respectively. These two classes of semigroups are of interest and have been widely studied (for instance, see \cite{kunz} and \cite{barucci}). Moreover, as it is shown in \cite{pacific}, they define a partition of the family of irreducible numerical semigroups.

In Section~\ref{sect-23-ns} we relate the $2$-semigroups ($3$-semigroups respectively) with the URSY-semigroups (URPSY-semigroups respectively), which were studied in \cite{colloquium} (\cite{bolletino} respectively). Moreover, as a consequence, we are able to obtain the pseudo-Frobenius numbers of the $2$-semigroups and $3$-semigroups.

In Section~\ref{sect-general-ns} we prove that, if $l$ is an even number (odd number, respectively), then any $l$-semigroup with Frobenius number $F$ can be obtained from a symmetric (pseudo-symmetric, respectively) numerical semigroup with Frobenius number $F$ after removing $\frac{l}{2}$ ($\frac{l-1}{2}$, respectively) elements greater than $\frac{F}{2}$ and less than $F$.

At last, in Section~\ref{sect-k-ns}, we give an algorithm that enables us to compute all the $l$-semigroups with a fixed Frobenius number.

We end this introduction with a brief comment on Wilf's conjecture. Showing an algorithm to solve the Frobenius problem (among other purposes), in \cite{wilf}, Wilf conjectured that, if $S$ is a numerical semigroup, then $\frac{\mathrm{g}(S)}{\mathrm{c}(S)}\leq1-\frac{1}{\mathrm{e}(S)}$. At present, this conjecture is open and its resolution is one of the most important problems in numerical semigroup theory (for instance, see the survey \cite{delgado-2}). In Section~\ref{sect-wilf} we show the above inequality in terms of the second king gaps and recover some known results about it.

\section{0-semigroups and 1-semigroups}\label{sect-01-ns}

A numerical semigroup $S$ is \emph{irreducible} if it cannot be expressed as the intersection of two numerical semigroups which contain $S$ properly. This concept was introduced in \cite{pacific}, where it is shown that a numerical semigroup $S$ is irreducible if and only if $S$ is maximal (with respect to the set inclusion) in the set of numerical semigroups with Frobenius number equal to $\mathrm{F}(S)$. From the results in \cite{barucci} and \cite{froberg}, it is deduced in \cite{pacific} that the family of irreducible numerical semigroups is the union of two well known families, namely the symmetric and the pseudo-symmetric numerical semigroups. Moreover, a numerical semigroup is symmetric (pseudo-symmetric, respectively) if it is irreducible and its Frobenius number is odd (even, respectively).

The following result is Corollary~4.5 of \cite{springer}.

\begin{lemma}\label{lem1}
	Let $S$ be a numerical semigroup.
	\begin{enumerate}
		\item $S$ is symmetric if and only if $\mathrm{g}(S)=\frac{\mathrm{F}(S)+1}{2}$.
		\item $S$ is pseudo-symmetric if and only if $\mathrm{g}(S)=\frac{\mathrm{F}(S)+2}{2}$.
	\end{enumerate}
\end{lemma}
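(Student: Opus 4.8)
\emph{Plan.} I would deduce both parts from a single identity together with a characterisation of irreducibility. First, intersect the disjoint decomposition ${\mathbb N}= S \mathbin{\mathaccent\cdot\cup} \mathrm{H}(S) \mathbin{\mathaccent\cdot\cup} \mathrm{L}(S)$ with the interval $\{0,1,\dots,\mathrm{F}(S)\}$: all gaps of $S$ lie in this interval, and $S\cap\{0,1,\dots,\mathrm{F}(S)\}=\mathrm{N}(S)$, so counting cardinalities gives $\mathrm{F}(S)+1=\mathrm{n}(S)+\mathrm{g}(S)$. Combining this with $\mathrm{g}(S)=\mathrm{n}(S)+\mathrm{l}(S)$ yields
$$\mathrm{l}(S)=2\mathrm{g}(S)-\mathrm{F}(S)-1 .$$
Hence $\mathrm{l}(S)$ and $\mathrm{F}(S)$ have opposite parity (their sum is $2\mathrm{g}(S)-1$), and, crucially, $\mathrm{l}(S)=0$ is equivalent to $\mathrm{g}(S)=\frac{\mathrm{F}(S)+1}{2}$ while $\mathrm{l}(S)=1$ is equivalent to $\mathrm{g}(S)=\frac{\mathrm{F}(S)+2}{2}$. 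So the lemma will follow once we show that $S$ is symmetric $\iff$ $\mathrm{l}(S)=0$ and pseudo-symmetric $\iff$ $\mathrm{l}(S)=1$.

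\emph{Reduction to irreducibility.} Recall that $S$ is irreducible precisely when it is maximal, for inclusion, among the numerical semigroups with Frobenius number $\mathrm{F}(S)$, and that symmetric (resp.\ pseudo-symmetric) means irreducible with $\mathrm{F}(S)$ odd (resp.\ even). In view of the parity remark, it is therefore enough to prove
$$S \text{ is maximal} \iff \mathrm{l}(S)\le 1 .$$
The implication ``$\Leftarrow$'' is easy: by the displayed identity and the parity remark, if $\mathrm{l}(S)\le 1$ then $\mathrm{l}(S)$ is the least value $\mathrm{l}(T)$ can attain over numerical semigroups $T$ with $\mathrm{F}(T)=\mathrm{F}(S)$, so $\mathrm{g}(S)\le\mathrm{g}(T)$ for every such $T$; since a proper inclusion strictly decreases the genus, no such $T$ contains $S$ properly, i.e.\ $S$ is maximal.

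\emph{The hard direction.} For ``$\Rightarrow$'' I would prove the contrapositive: if $\mathrm{l}(S)\ge 2$ then $S$ is not maximal. One checks that the involution $x\mapsto\mathrm{F}(S)-x$ carries $\mathrm{L}(S)$ onto itself and has at most one fixed point, so $\mathrm{L}(S)$, having at least two elements, contains an element larger than $\frac{\mathrm{F}(S)}{2}$; put $w=\max\mathrm{L}(S)$, so $w>\frac{\mathrm{F}(S)}{2}$, hence $2w>\mathrm{F}(S)$ and $2w\in S$. Moreover $w+s\in S$ for every $s\in S\setminus\{0\}$: were this false, $w+s$ would be a gap with $w+s>w=\max\mathrm{L}(S)$, hence $w+s\in\mathrm{H}(S)$, say $w+s=\mathrm{F}(S)-t$ with $t\in\mathrm{N}(S)$, and then $\mathrm{F}(S)-w=s+t\in S$, contradicting $w\in\mathrm{L}(S)$. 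Consequently $S\cup\{w\}$ is a numerical semigroup, and since $w<\mathrm{F}(S)$ it has the same Frobenius number as $S$; thus $S$ is not maximal. Putting everything together, $S$ symmetric $\iff$ $\mathrm{l}(S)=0$ $\iff$ $\mathrm{g}(S)=\frac{\mathrm{F}(S)+1}{2}$, and $S$ pseudo-symmetric $\iff$ $\mathrm{l}(S)=1$ $\iff$ $\mathrm{g}(S)=\frac{\mathrm{F}(S)+2}{2}$.

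\emph{Main obstacle.} The bookkeeping of the identity and the direction ``$\Leftarrow$'' are routine. The crux is the direction ``$\Rightarrow$'': starting from a non-irreducible $S$ one must produce a concrete gap whose adjunction enlarges $S$ without changing the Frobenius number. The choice $w=\max\mathrm{L}(S)$ works, but the verification that $S\cup\{w\}$ is again a numerical semigroup --- equivalently, that $w\in\mathrm{PF}(S)$ and $2w\in S$ --- and that $\mathrm{F}(S\cup\{w\})=\mathrm{F}(S)$ is the delicate point. It is also there that one sees why, in the borderline case $\mathrm{l}(S)=1$, the surviving gap of second kind is forced to equal $\frac{\mathrm{F}(S)}{2}$, so that $2w=\mathrm{F}(S)\notin S$ and no enlargement exists, which is exactly what makes pseudo-symmetric semigroups maximal.
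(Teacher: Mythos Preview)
Your argument is correct. Note, however, that the paper does not give its own proof of this lemma: it simply cites it as Corollary~4.5 of \cite{springer}. What you have written is therefore a self-contained substitute for that citation.

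It is worth pointing out how your route relates to the surrounding material. The paper's logical flow is Lemma~\ref{lem1} $+$ Lemma~\ref{lem2} $\Rightarrow$ Proposition~\ref{prop3} (symmetric $\iff \mathrm{l}(S)=0$, pseudo-symmetric $\iff \mathrm{l}(S)=1$). You run this in reverse: you first derive the identity $\mathrm{l}(S)=2\mathrm{g}(S)-\mathrm{F}(S)-1$ (this is exactly Lemma~\ref{lem2}), then prove directly that irreducibility is equivalent to $\mathrm{l}(S)\le 1$, and finally read off Lemma~\ref{lem1}. The ``hard direction'' you isolate---that when $\mathrm{l}(S)\ge 2$ the element $w=\max\mathrm{L}(S)$ satisfies $w>\frac{\mathrm{F}(S)}{2}$ and $S\cup\{w\}$ is a numerical semigroup with the same Frobenius number---is precisely item~2 (and, implicitly, item~3) of Lemma~\ref{lem6}, which the paper again imports from the literature. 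So your proof is not merely an alternative to the cited Corollary~4.5: it simultaneously supplies proofs of Lemma~\ref{lem2}, the relevant parts of Lemma~\ref{lem6}, and Proposition~\ref{prop3}, making this cluster of results independent of the external references. The trade-off is that the paper's order keeps the characterisations of symmetry by genus and by $\mathrm{l}(S)$ as separate, citable facts, whereas your approach entangles them; either organisation is fine.
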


The next result has an easy proof and, therefore, we omit it.

\begin{lemma}\label{lem2}
	If $S$ is a numerical semigroup, then
	\begin{enumerate}
		\item $\mathrm{g}(S)+\mathrm{n}(S)=\mathrm{F}(S)+1$;
		\item $\mathrm{g}(S)=\mathrm{n}(S)+\mathrm{l}(S)$;
		\item $\mathrm{g}(S)=\frac{\mathrm{F}(S)+1+\mathrm{l}(S)}{2}$.
	\end{enumerate}
\end{lemma}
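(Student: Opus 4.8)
The plan is to prove the three identities in the order stated, using only the decomposition $\mathbb{N} = S \mathbin{\mathaccent\cdot\cup} \mathrm{H}(S) \mathbin{\mathaccent\cdot\cup} \mathrm{L}(S)$ recalled in the introduction, together with the elementary remark that every gap of $S$ is at most $\mathrm{F}(S)$.

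For item~(1), I would argue as follows. Since $\mathrm{F}(S)$ is by definition the largest integer not belonging to $S$, we have $\mathbb{N} \setminus S \subseteq \{0, 1, \ldots, \mathrm{F}(S)\}$; moreover $\mathrm{F}(S) \notin S$, so $S \cap \{0, 1, \ldots, \mathrm{F}(S)\} = \mathrm{N}(S)$. Hence $\{0, 1, \ldots, \mathrm{F}(S)\}$ is the disjoint union of $\mathrm{N}(S)$ and $\mathbb{N} \setminus S$, and comparing cardinalities gives $\mathrm{F}(S) + 1 = \mathrm{n}(S) + \mathrm{g}(S)$. For item~(2), removing $S$ from both sides of $\mathbb{N} = S \mathbin{\mathaccent\cdot\cup} \mathrm{H}(S) \mathbin{\mathaccent\cdot\cup} \mathrm{L}(S)$ yields $\mathbb{N} \setminus S = \mathrm{H}(S) \mathbin{\mathaccent\cdot\cup} \mathrm{L}(S)$, so $\mathrm{g}(S) = |\mathrm{H}(S)| + \mathrm{l}(S)$. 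It then remains to check that $|\mathrm{H}(S)| = \mathrm{n}(S)$, which holds because $s \mapsto \mathrm{F}(S) - s$ is a bijection from $\mathrm{N}(S)$ onto $\mathrm{H}(S)$: it is surjective directly from the definition of $\mathrm{H}(S)$, and injective since it is the restriction to $\mathrm{N}(S)$ of an injective map on $\mathbb{Z}$.

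Finally, item~(3) is a purely formal consequence of the first two: from~(1) we have $\mathrm{n}(S) = \mathrm{F}(S) + 1 - \mathrm{g}(S)$, and substituting this into the identity of~(2) gives $\mathrm{g}(S) = \mathrm{F}(S) + 1 - \mathrm{g}(S) + \mathrm{l}(S)$, that is, $2\,\mathrm{g}(S) = \mathrm{F}(S) + 1 + \mathrm{l}(S)$. No step presents a real obstacle; the only points that deserve a moment's care are the observation that all gaps lie in $\{0, 1, \ldots, \mathrm{F}(S)\}$ and the bijectivity of $s \mapsto \mathrm{F}(S) - s$, both of which are immediate. This is presumably why the authors found the statement routine enough to omit its proof.
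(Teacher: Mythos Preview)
Your proof is correct; the paper explicitly omits the proof of this lemma as easy, so there is nothing to compare against. Your argument supplies precisely the routine verification the authors had in mind, and your final remark anticipates this.
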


As a direct consequence of Lemmas~\ref{lem1} and \ref{lem2}, we have the following result.

\begin{proposition}\label{prop3}
	If $S$ is a numerical semigroup, then
	\begin{enumerate}
		\item $S$ is symmetric if and only if $\,\mathrm{l}(S)=0$;
		\item $S$ is pseudo-symmetric if and only if $\,\mathrm{l}(S)=1$.
	\end{enumerate}
\end{proposition}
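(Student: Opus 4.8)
The plan is to derive Proposition~\ref{prop3} purely by combining the characterizations in Lemma~\ref{lem1} with the genus identity in part~(3) of Lemma~\ref{lem2}. The key observation is that Lemma~\ref{lem2}(3) gives $\mathrm{g}(S)=\frac{\mathrm{F}(S)+1+\mathrm{l}(S)}{2}$, so the genus of a numerical semigroup is completely determined, once $\mathrm{F}(S)$ is fixed, by the number of gaps of second kind. Comparing this with the two equalities in Lemma~\ref{lem1} should translate each ``symmetric''/``pseudo-symmetric'' condition directly into a condition on $\mathrm{l}(S)$.

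First I would prove part~(1). By Lemma~\ref{lem1}(1), $S$ is symmetric if and only if $\mathrm{g}(S)=\frac{\mathrm{F}(S)+1}{2}$. Substituting the expression from Lemma~\ref{lem2}(3), this is equivalent to $\frac{\mathrm{F}(S)+1+\mathrm{l}(S)}{2}=\frac{\mathrm{F}(S)+1}{2}$, which simplifies at once to $\mathrm{l}(S)=0$. Then I would prove part~(2) in the same way: by Lemma~\ref{lem1}(2), $S$ is pseudo-symmetric if and only if $\mathrm{g}(S)=\frac{\mathrm{F}(S)+2}{2}$, and substituting Lemma~\ref{lem2}(3) yields $\frac{\mathrm{F}(S)+1+\mathrm{l}(S)}{2}=\frac{\mathrm{F}(S)+2}{2}$, i.e.\ $\mathrm{l}(S)=1$.

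There is essentially no obstacle here — the statement is a one-line algebraic consequence of the two lemmas, which is presumably why the authors flagged it as ``a direct consequence.'' The only thing worth a moment's care is making sure the chain of ``if and only if'' steps is genuinely reversible: each substitution is an identity valid for every numerical semigroup, and cancelling the common denominator $2$ and the common summand $\mathrm{F}(S)+1$ (resp.\ $\mathrm{F}(S)$) are reversible operations, so both directions of each equivalence come for free. Hence the whole proof is just these two short computations, one per item.
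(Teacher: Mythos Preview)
Your proof is correct and is exactly the argument the paper intends: the authors simply state that Proposition~\ref{prop3} is ``a direct consequence of Lemmas~\ref{lem1} and \ref{lem2}'' without writing out the details, and your substitution of Lemma~\ref{lem2}(3) into the two characterizations of Lemma~\ref{lem1} is the natural way to unpack that remark.
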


From Lemma~\ref{lem1} we also deduce a useful result that will be used several times in Section~\ref{sect-wilf}.

\begin{corollary}\label{cor_odd}
	If $S$ is a numerical semigroup different from $\mathbb{N}$, then $\mathrm{F}(S)+\mathrm{l}(S)$ is an odd number and $\mathrm{F}(S)\geq\mathrm{l}(S)+1$.
\end{corollary}

\begin{proof}
	From item 3 of Lemma~\ref{lem1}, it is clear that $\mathrm{F}(S)+\mathrm{l}(S)$ is an odd number (even when $S=\mathbb{N}$). On the other hand, if $S$ is a numerical semigroup different from $\mathbb{N}$, then $0\in\mathrm{N}(S)$ and, consequently, $\mathrm{n}(S)\geq 1$. Now, combining items 1 and 2 of Lemma~\ref{lem1}, we have that $\mathrm{F}(S)\geq\mathrm{l}(S)+1$.
\end{proof}

The next result is consequence of Corollaries~4.11 and 4.16 of \cite{springer}. 

\begin{lemma}\label{lem4}
	Let $S$ be a numerical semigroup.
	\begin{itemize}
		\item[a)] The following conditions are equivalent.
		\begin{itemize}
			\item[a.1)] $S$ is symmetric.
			\item[a.2)] $\mathrm{PF}(S)=\{\mathrm{F}(S) \}$.
			\item[a.3)] $\mathrm{t}(S)=1$.
		\end{itemize}
		\item[b)] $S$ is pseudo-symmetric if and only if $\mathrm{PF}(S)=\left\{\mathrm{F}(S),\frac{\mathrm{F}(S)}{2} \right\}$.
	\end{itemize}
\end{lemma}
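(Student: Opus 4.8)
\medskip
\noindent\emph{Proof proposal.}
The plan is to deduce everything from the two ``reflection'' descriptions of the families involved --- $S$ is symmetric iff $\mathrm{F}(S)-x\in S$ for every gap $x$, and $S$ is pseudo-symmetric iff $\mathrm{F}(S)$ is even and $\mathrm{F}(S)-x\in S$ for every gap $x\neq\mathrm{F}(S)/2$ --- together with one structural fact about pseudo-Frobenius numbers. First I would establish that fact: \emph{for every gap $h$ of $S$ there is some $f\in\mathrm{PF}(S)$ with $f-h\in S$}, and, more precisely, $\mathrm{PF}(S)$ is exactly the set of elements of ${\mathbb N}\setminus S$ that are maximal for the partial order $x\leq_S y\Leftrightarrow y-x\in S$. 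This is short: the set $\{y\in{\mathbb N}\setminus S\mid h\leq_S y\}$ is finite and nonempty, hence has a $\leq_S$-maximal element $f$, which by transitivity is maximal in all of ${\mathbb N}\setminus S$; and $g$ being $\leq_S$-maximal in ${\mathbb N}\setminus S$ is equivalent to $g+s\in S$ for all $s\in S\setminus\{0\}$, i.e. to $g\in\mathrm{PF}(S)$. Since $\mathrm{F}(S)\in\mathrm{PF}(S)$ always and the type is the cardinality of $\mathrm{PF}(S)$, this already gives a.2)$\,\Leftrightarrow\,$a.3).

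Next I would handle a). For a.1)$\,\Rightarrow\,$a.2): if $S$ is symmetric and $h\in\mathrm{PF}(S)$ with $h\neq\mathrm{F}(S)$, then $h$ is a gap, so symmetry gives $\mathrm{F}(S)-h\in S$ with $\mathrm{F}(S)-h\neq 0$, and then $h\in\mathrm{PF}(S)$ forces $\mathrm{F}(S)=h+(\mathrm{F}(S)-h)\in S$, a contradiction. For a.2)$\,\Rightarrow\,$a.1): for any gap $h$ the structural fact provides $f\in\mathrm{PF}(S)=\{\mathrm{F}(S)\}$ with $f-h\in S$, i.e. $\mathrm{F}(S)-h\in S$, so $S$ is symmetric.

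Then b) goes through the same way. For the forward implication, $\mathrm{F}(S)/2$ is a gap (else $\mathrm{F}(S)\in S$), and $\mathrm{F}(S)/2\in\mathrm{PF}(S)$, since if $\mathrm{F}(S)/2+s$ were a gap for some $s\in S\setminus\{0\}$ then, as $\mathrm{F}(S)/2+s\neq\mathrm{F}(S)/2$, pseudo-symmetry would give $\mathrm{F}(S)/2-s\in S$ and hence $\mathrm{F}(S)/2\in S$; conversely, for $h\in\mathrm{PF}(S)$ with $h\neq\mathrm{F}(S)$ the computation of a.1)$\,\Rightarrow\,$a.2) rules out the branch $\mathrm{F}(S)-h\in S$ of pseudo-symmetry, leaving $h=\mathrm{F}(S)/2$; thus $\mathrm{PF}(S)=\{\mathrm{F}(S),\mathrm{F}(S)/2\}$. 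For the converse, $\mathrm{F}(S)/2\in\mathrm{PF}(S)\subseteq{\mathbb Z}$ forces $\mathrm{F}(S)$ to be even, and given a gap $h\neq\mathrm{F}(S)/2$ I would pick $f\in\mathrm{PF}(S)$ with $f-h\in S$: if $f=\mathrm{F}(S)$ we are done, and if $f=\mathrm{F}(S)/2$ then $\mathrm{F}(S)/2-h\in S\setminus\{0\}$ together with $\mathrm{F}(S)/2\in\mathrm{PF}(S)$ yields $\mathrm{F}(S)-h=\mathrm{F}(S)/2+(\mathrm{F}(S)/2-h)\in S$. Either way $\mathrm{F}(S)-h\in S$, so $S$ is pseudo-symmetric.

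The only step I expect to need genuine care is the structural fact of the first paragraph (that every gap is $\leq_S$-dominated by some pseudo-Frobenius number); everything after it is a short case analysis whose only delicate points are the degenerate values $\mathrm{F}(S)-h=0$ and $h=\mathrm{F}(S)/2$. I would also be careful to phrase the arguments using the reflection characterizations of symmetric and pseudo-symmetric semigroups (Corollaries~4.11 and 4.16 of \cite{springer}) rather than the genus formulas of Lemma~\ref{lem1}, since it is the reflection form of the definitions that makes each of these implications immediate.
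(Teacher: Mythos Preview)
Your argument is correct. The paper, however, does not prove this lemma at all: it simply states that the result ``is consequence of Corollaries~4.11 and 4.16 of \cite{springer}'' and moves on. So you have supplied a genuine proof where the paper merely cites one.

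One small point of care: you invoke ``Corollaries~4.11 and 4.16 of \cite{springer}'' as the \emph{reflection} characterizations (i.e., $S$ symmetric iff $\mathrm{F}(S)-x\in S$ for every gap $x$, and the analogous statement for pseudo-symmetric). In the paper, those same corollaries are cited as the source of the \emph{pseudo-Frobenius} characterization --- that is, of Lemma~\ref{lem4} itself. So either those corollaries already contain what you are trying to prove (in which case your argument is a re-derivation rather than an independent proof), or the reflection descriptions live elsewhere in \cite{springer} and you should adjust the citation. Mathematically nothing is at stake --- the reflection descriptions are standard and your deductions from them are sound --- but if you want the write-up to be non-circular you should pin down exactly which statements in \cite{springer} you are taking as input.
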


As an immediate consequence of Proposition~\ref{prop3} and Lemma~\ref{lem4}, we have the next result.

\begin{corollary}
	Let $S$ be a numerical semigroup.
	\begin{enumerate}
		\item $\mathrm{l}(S)=0$ if and only if $\mathrm{PF}(S)=\{\mathrm{F}(S) \}$.
		\item $\mathrm{l}(S)=1$ if and only if $\mathrm{PF}(S)=\left\{\mathrm{F}(S),\frac{\mathrm{F}(S)}{2} \right\}$.
	\end{enumerate}
\end{corollary}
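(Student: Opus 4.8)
The plan is to deduce this corollary directly by combining Proposition~\ref{prop3} with Lemma~\ref{lem4}, so the argument should be essentially a two-line bookkeeping exercise. Concretely, for part (1) I would chain the equivalences: $\mathrm{l}(S)=0$ if and only if $S$ is symmetric (by item (1) of Proposition~\ref{prop3}), and $S$ is symmetric if and only if $\mathrm{PF}(S)=\{\mathrm{F}(S)\}$ (by the equivalence of a.1) and a.2) in Lemma~\ref{lem4}). Transitivity of ``if and only if'' then gives $\mathrm{l}(S)=0 \iff \mathrm{PF}(S)=\{\mathrm{F}(S)\}$.

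For part (2) the same template applies: $\mathrm{l}(S)=1$ if and only if $S$ is pseudo-symmetric (item (2) of Proposition~\ref{prop3}), and $S$ is pseudo-symmetric if and only if $\mathrm{PF}(S)=\left\{\mathrm{F}(S),\frac{\mathrm{F}(S)}{2}\right\}$ (item b) of Lemma~\ref{lem4}). Composing these two equivalences yields the claim. I would write both parts in a single short paragraph, perhaps noting explicitly ``By Proposition~\ref{prop3} and Lemma~\ref{lem4}'' and then displaying the chain of equivalences for each of the two cases.

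Since every ingredient is already available and each is stated as a genuine biconditional, there is no real obstacle here — the only thing to be slightly careful about is that in part (1) Lemma~\ref{lem4} offers three mutually equivalent characterizations (a.1, a.2, a.3), and I should pick a.2) rather than a.3), because a.2) is the one phrased in terms of $\mathrm{PF}(S)$ that matches the statement of the corollary. In fact the author most likely simply writes ``This is an immediate consequence of Proposition~\ref{prop3} and Lemma~\ref{lem4}'' with at most a sentence of elaboration, and I would do the same; there is nothing to grind through.
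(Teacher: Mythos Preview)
Your proposal is correct and matches the paper's approach exactly: the paper gives no written proof at all, stating only that the corollary is ``an immediate consequence of Proposition~\ref{prop3} and Lemma~\ref{lem4},'' which is precisely the chaining of biconditionals you describe.
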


\begin{remark}\label{rem-asns}
	Let us recall that a numerical semigroup $S$ is \textit{almost symmetric} if $\mathrm{L}(S) \subseteq \mathrm{PF}(S)$. This class of numerical semigroups was introduced in \cite{barucci-froberg} and it is characterized by the formula
	\[ \mathrm{g}(S)=\frac{\mathrm{F}(S)+\mathrm{t}(S)}{2} \]
	and by the equality 
	\[ \mathrm{PF}(S)=\mathrm{L}(S)\cup \{\mathrm{F}(S)\} .\]
	Therefore, the set of almost symmetric numerical semigroups is exactly the set of $(t-1)$-semigroups with type $t$. In particular, and as it is well known, every irreducible numerical semigroup is almost symmetric.
\end{remark}

\section{2-semigroups and 3-semigroups}\label{sect-23-ns}

We begin this section studying the numerical semigroups $S$ such that $\mathrm{l}(S)=2$. For that we need to introduce several concepts and results.

If $F$ is a positive integer, then we denote by $\mathscr{S}(F)$ the set of all numerical semigroups with Frobenius number $F$.

The following result is deduced from \cite{froberg} (or from Lemmas~4 and 5 in \cite{computer}).

\begin{lemma}\label{lem6}
	Let $S$ be a numerical semigroup with Frobenius number $F$.
	\begin{enumerate}
		\item $S$ is irreducible if and only if $S$ is maximal in $\mathscr{S}(F)$.
		\item If $\,h=\max\left\{x\in {\mathbb N} \setminus S \,\big\vert\, F-x \in {\mathbb N} \setminus S \mbox{ and } x\not=\frac{F}{2} \right\}$, then $S\cup\{h\} \in \mathscr{S}(F)$.
		\item $S$ is irreducible if and only if  $\left\{x\in {\mathbb N} \setminus S \,\big\vert\, F-x \in {\mathbb N} \setminus S \mbox{ and } x\not=\frac{F}{2} \right\} = \emptyset$.
		\item $S \cup \left\{x\in {\mathbb N} \setminus S \,\big\vert\, F-x \in {\mathbb N} \setminus S \mbox{ and } \, x>\frac{F}{2} \right\}$ is a numerical semigroup that is irreducible and with Frobenius number $F$.
	\end{enumerate}
\end{lemma}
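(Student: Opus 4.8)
The plan is to handle the four parts in order, with part~2 carrying essentially all of the work. Part~1 merely restates the characterization of irreducibility recalled in Section~\ref{sect-01-ns} (a numerical semigroup is irreducible if and only if it is maximal, under inclusion, among numerical semigroups with the same Frobenius number, proved in \cite{pacific}), so I would simply invoke it; the direction ``maximal $\Rightarrow$ irreducible'' is in any case immediate, since if $S=S_1\cap S_2$ with $S\subsetneq S_i$ then $F$ lies outside one of the $S_i$, and as $S\subseteq S_i$ forces $\mathrm{F}(S_i)\le F$, that $S_i$ lies in $\mathcal{L}(F)$ and strictly contains $S$.

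For part~2, put $\Delta(S)=\{x\in{\mathbb N}\setminus S\mid F-x\in{\mathbb N}\setminus S,\ x\neq F/2\}$, which is nonempty by hypothesis and stable under the involution $x\mapsto F-x$, and set $h=\max\Delta(S)$. I would show $S\cup\{h\}$ is a numerical semigroup by checking closure under addition, namely $h+s\in S$ for every $s\in S\setminus\{0\}$ and $2h\in S$. In each case the engine is the same: were $h+s$ (resp.\ $2h$) a gap, it would be a gap strictly larger than $h$, hence outside $\Delta(S)$, so it would either equal $F/2$ or have its mirror $F-(h+s)$ (resp.\ $F-2h$) in $S$; the first is impossible because it would place $F-h$ in $\Delta(S)$ above $h$, and the second is impossible because $F-(h+s)\in S$ together with $s\in S$ (after discarding $F-(h+s)=0$, which gives $F-h=s\in S$ at once) forces $F-h\in S$, contrary to $h\in\Delta(S)$; for $2h\in S$ one feeds $s=F-2h$ into the fact $h+s\in S$ just proved. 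Finally $F\notin S\cup\{h\}$ because $h\neq F$ (as $F-h$ is a gap) and no integer above $F$ is a gap, so $\mathrm{F}(S\cup\{h\})=F$, i.e.\ $S\cup\{h\}\in\mathcal{L}(F)$. The delicate point, and the only real obstacle, is making these two mirror arguments airtight in the small cases $h+s=F/2$, $F-(h+s)=0$, $2h=F/2$, which are exactly where one genuinely uses that $h$ is the \emph{maximum} of $\Delta(S)$, not merely an element of it.

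Part~3 follows quickly. If $\Delta(S)=\emptyset$ but $S$ were not maximal in $\mathcal{L}(F)$, choose $T\in\mathcal{L}(F)$ with $S\subsetneq T$ and $x\in T\setminus S$: then $x<F$ is a gap, $F-x$ is a gap (otherwise $F=x+(F-x)\in T$), and $x\neq F/2$ (otherwise $F=2x\in T$), so $x\in\Delta(S)$, a contradiction; thus $S$ is maximal, hence irreducible by part~1. Conversely, if $\Delta(S)\neq\emptyset$ then part~2 gives $S\subsetneq S\cup\{h\}\in\mathcal{L}(F)$, so $S$ is not maximal, hence not irreducible.

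For part~4 I would iterate part~2. One first notes that $h=\max\Delta(S)$ satisfies $h>F/2$ (otherwise its mirror $F-h$ would be a larger element of $\Delta(S)$), so $h$ belongs to $B(S)=\{x\in{\mathbb N}\setminus S\mid F-x\in{\mathbb N}\setminus S,\ x>F/2\}$; a direct check then shows that passing from $S$ to $S\cup\{h\}$ deletes exactly $h$ from this set, $B(S\cup\{h\})=B(S)\setminus\{h\}$, and that the maxima of $\Delta$ and of $B$ coincide at every stage. Hence repeated application of part~2 produces a chain $S\subsetneq S\cup\{h_0\}\subsetneq\cdots$ inside $\mathcal{L}(F)$ that exhausts $B(S)$ after $|B(S)|$ steps and terminates at $S\cup B(S)$, for which the set $\Delta$ is empty; by part~3 this last semigroup is irreducible, and by part~2 its Frobenius number is $F$.
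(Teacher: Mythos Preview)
Your proof is correct. The paper, however, does not prove this lemma at all: it states the result and attributes it to \cite{froberg} (or to Lemmas~4 and~5 of \cite{computer}). So there is no in-paper argument to compare against; you have supplied a self-contained proof where the authors simply cite the literature.

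A small remark on efficiency: in part~2, the verification that $2h\in S$ can be shortened. Since $\Delta(S)$ is stable under $x\mapsto F-x$ and $h=\max\Delta(S)$, one has $F-h\le h$, and equality is excluded because $h\neq F/2$; hence $h>F/2$, so $2h>F$ and $2h\in S$ automatically. Your route via ``feed $s=F-2h$ back into the already-proved fact $h+s\in S$'' is not wrong, but it is doing more work than necessary (and tacitly uses $F-2h\neq 0$, which again is just $h\neq F/2$). Observing $h>F/2$ up front also streamlines part~4, since it immediately places $h$ in $B(S)$.
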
 

The next result has an immediate proof.

\begin{lemma}\label{lem7}
	Let $S$ be a numerical semigroup and $x\in S$. Then $S\setminus \{x\}$ is a numerical semigroup if and only if $x\in\mathrm{msg}(S)$.
\end{lemma}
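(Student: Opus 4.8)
The claim is the standard characterization of when removing a single element of a numerical semigroup $S$ again yields a numerical semigroup, and the natural criterion is membership in the minimal system of generators. I would argue by proving both implications directly, since each direction is short.

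For the nontrivial direction, suppose $x \notin \mathrm{msg}(S)$; I would show $S \setminus \{x\}$ is not a numerical semigroup. Since $x \in S$ and $x$ is not a minimal generator, and $0$ is always a minimal generator (or trivially $x \neq 0$ here because $0$ must remain for the semigroup axioms anyway), $x$ can be written as a sum $x = a + b$ with $a, b \in S \setminus \{0\}$; equivalently $x \in S + S \setminus \{0\}$. The key point is that then $a, b < x$, so $a, b \in S \setminus \{x\}$, yet $a + b = x \notin S \setminus \{x\}$, so $S \setminus \{x\}$ fails to be closed under addition. (One should note $S \setminus \{x\}$ still contains $0$ and still has finite complement in $\mathbb{N}$, so closure is exactly the condition that fails.) Conversely, suppose $x \in \mathrm{msg}(S)$; I would show $S \setminus \{x\}$ is a numerical semigroup. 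It contains $0$ (since a minimal generator is nonzero) and its complement $(\mathbb{N} \setminus S) \cup \{x\}$ is finite. For closure under addition: take $y, z \in S \setminus \{x\}$ with $y + z = x$; this would exhibit $x$ as a sum of two elements of $S$ smaller than $x$, contradicting that $x$ lies in the \emph{minimal} system of generators (a minimal generator cannot be expressed using other elements of $S$). Hence no such pair exists, and for $y + z \neq x$ we have $y + z \in S \setminus \{x\}$ automatically since $y + z \in S$.

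I do not anticipate a real obstacle here; the only subtlety is making precise the elementary fact that an element of $\mathrm{msg}(S)$ cannot be written as a sum of two nonzero elements of $S$ (and conversely, an element of $S$ not in $\mathrm{msg}(S)$ can be so written). This is immediate from the definition of minimal system of generators given earlier in the excerpt together with the uniqueness result cited from \cite{springer}: if $x$ could be written as $x = a+b$ with $a,b \in S\setminus\{0\}$, then $S$ would be generated by $\mathrm{msg}(S) \setminus \{x\}$, contradicting minimality; and if $x \notin \mathrm{msg}(S)$, then $x \in \langle \mathrm{msg}(S) \rangle = \langle \mathrm{msg}(S)\setminus\{x\}\rangle$, which forces such a decomposition. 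Because the statement is labeled as having an immediate proof in the paper, I would keep the write-up to two or three lines, invoking exactly this equivalence.
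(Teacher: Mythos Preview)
Your argument is correct and is exactly the standard verification one expects; the paper itself omits the proof entirely, simply declaring it immediate. The only slip is the parenthetical ``$0$ is always a minimal generator''---it is not, since $0$ never lies in $\mathrm{msg}(S)$---but you immediately recover by noting that $x=0$ is handled trivially (removing $0$ destroys the identity), so nothing is lost.
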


Now, from Lemma~\ref{lem2} we deduce the following one.

\begin{lemma}\label{lem8}
	Let $S$ be a numerical semigroup and $x\in\mathrm{msg}(S)$ such that $x< \mathrm{F}(S)$. Then $\mathrm{l}(S\setminus \{x\})=\mathrm{l}(S)+2$.
\end{lemma}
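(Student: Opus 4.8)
The plan is to track how the three quantities $\mathrm{F}$, $\mathrm{n}$, and $\mathrm{g}$ change when we pass from $S$ to $T := S \setminus \{x\}$, and then read off $\mathrm{l}(T)$ from part (2) of Lemma~\ref{lem2}, which says $\mathrm{g} = \mathrm{n} + \mathrm{l}$. First I would observe that $T$ is indeed a numerical semigroup by Lemma~\ref{lem7}, since $x \in \mathrm{msg}(S)$. Next, because $x < \mathrm{F}(S)$, removing $x$ does not change the largest gap: $\mathrm{F}(S)$ is still the largest integer not in $T$, so $\mathrm{F}(T) = \mathrm{F}(S)$.

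The key step is then to compare $\mathrm{g}$ and $\mathrm{n}$. Since $T = S \setminus \{x\}$ with $x \in S$, we have ${\mathbb N} \setminus T = ({\mathbb N} \setminus S) \cup \{x\}$, a disjoint union, so $\mathrm{g}(T) = \mathrm{g}(S) + 1$. On the other hand, $\mathrm{N}(T) = \{t \in T \mid t < \mathrm{F}(T)\} = \{s \in S \mid s < \mathrm{F}(S)\} \setminus \{x\} = \mathrm{N}(S) \setminus \{x\}$; here I use both $\mathrm{F}(T) = \mathrm{F}(S)$ and the hypothesis $x < \mathrm{F}(S)$ (which guarantees $x$ was actually a small element of $S$). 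Hence $\mathrm{n}(T) = \mathrm{n}(S) - 1$.

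Combining these via Lemma~\ref{lem2}(2) applied to both $S$ and $T$:
\begin{equation*}
\mathrm{l}(T) = \mathrm{g}(T) - \mathrm{n}(T) = \big(\mathrm{g}(S) + 1\big) - \big(\mathrm{n}(S) - 1\big) = \mathrm{g}(S) - \mathrm{n}(S) + 2 = \mathrm{l}(S) + 2,
\end{equation*}
which is the claim. Alternatively, one could run the whole argument through Lemma~\ref{lem2}(3): since $\mathrm{F}(T) = \mathrm{F}(S)$ and $\mathrm{g}(T) = \mathrm{g}(S) + 1$, the identity $\mathrm{g} = \frac{\mathrm{F} + 1 + \mathrm{l}}{2}$ forces $\mathrm{l}(T) = 2\mathrm{g}(T) - \mathrm{F}(T) - 1 = 2\mathrm{g}(S) + 2 - \mathrm{F}(S) - 1 = \mathrm{l}(S) + 2$; this route even bypasses the need to discuss $\mathrm{n}$ directly.

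There is no serious obstacle here; the only point requiring a little care is the bookkeeping showing $\mathrm{F}(T) = \mathrm{F}(S)$ and the corresponding effect on $\mathrm{N}$, both of which hinge squarely on the hypothesis $x < \mathrm{F}(S)$ — without it, removing $x = \mathrm{F}(S)$ would be impossible anyway ($\mathrm{F}(S) \notin S$), but removing a larger element is not in play. The genus increasing by exactly one is immediate from $x$ being a single newly created gap, and that is really the whole content of the lemma.
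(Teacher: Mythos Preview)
Your proof is correct and follows exactly the approach the paper indicates: the paper simply says the lemma is deduced from Lemma~\ref{lem2}, and you have spelled out precisely that deduction (tracking $\mathrm{F}$, $\mathrm{g}$, and $\mathrm{n}$ across the removal of $x$ and applying either part~(2) or part~(3) of Lemma~\ref{lem2}). There is nothing to add.
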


Let $S$ be a numerical semigroup with $\mathrm{l}(S)\geq 2$. Then, from Proposition~\ref{prop3}, we know that $S$ is not irreducible. Moreover, from Lemma~\ref{lem6}, there exists $\mathrm{h}(S)=\max\left\{x\in {\mathbb N} \setminus S \,\big\vert\, \mathrm{F}(S)-x \in {\mathbb N} \setminus S \mbox{ and } \, x\not=\frac{\mathrm{F}(S)}{2} \right\} = \max\big(\mathrm{L}(S)\big)$ and $S\cup \{\mathrm{h}(S)\}$ is a numerical semigroup. In addition, it is clear that $\mathrm{h}(S)$ is a minimal generator of $S\cup \{\mathrm{h}(S)\}$ which is less than $\mathrm{F}(S\cup \{\mathrm{h}(S)\})$. Therefore, by applying Lemma~\ref{lem8}, we have the next result.

\begin{lemma}\label{lem9}
	If $S\in \mathscr{S}(F)$ and $\,\mathrm{l}(S)\geq2$, then $\mathrm{l}(S\cup \{\mathrm{h}(S)\})=\mathrm{l}(S)-2$.
\end{lemma}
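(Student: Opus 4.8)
The plan is to reduce Lemma~\ref{lem9} directly to Lemma~\ref{lem8}, which computes the jump in $\mathrm{l}$ when one removes a small minimal generator. First I would set $T=S\cup\{\mathrm{h}(S)\}$ and observe that, by the discussion preceding the statement (which invokes Lemma~\ref{lem6}(2)), $T$ is a numerical semigroup with $\mathrm{F}(T)=F$; indeed $\mathrm{h}(S)=\max(\mathrm{L}(S))<F$, so adjoining it does not change the Frobenius number, and $T$ still has finite complement and contains $0$. Thus $T\in\mathcal{L}(F)$ and $S=T\setminus\{\mathrm{h}(S)\}$.

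Next I would check that $\mathrm{h}(S)$ is eligible to play the role of the removed element in Lemma~\ref{lem8} applied to $T$. Two things are needed: that $\mathrm{h}(S)\in\mathrm{msg}(T)$ and that $\mathrm{h}(S)<\mathrm{F}(T)=F$. The second is immediate from $\mathrm{h}(S)\in\mathrm{L}(S)\subseteq{\mathbb N}\setminus S$ together with $F=\mathrm{F}(S)=\max({\mathbb N}\setminus S)$ and $\mathrm{h}(S)\neq F$ (since $\mathrm{h}(S)\leq F-1$ because $F\in S\cup\{\mathrm{h}(S)\}$ would force... more simply, $\mathrm{h}(S)\neq\tfrac{F}{2}$ and $\mathrm{h}(S)$ is a gap of $S$ distinct from $F$, hence $\mathrm{h}(S)<F$). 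For the first, I would use Lemma~\ref{lem7}: $\mathrm{h}(S)\in\mathrm{msg}(T)$ if and only if $T\setminus\{\mathrm{h}(S)\}=S$ is a numerical semigroup — which it is. So $\mathrm{h}(S)$ is a minimal generator of $T$ lying below $\mathrm{F}(T)$; this is exactly the hypothesis of Lemma~\ref{lem8}.

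Applying Lemma~\ref{lem8} to $T$ and $x=\mathrm{h}(S)$ gives $\mathrm{l}(T\setminus\{\mathrm{h}(S)\})=\mathrm{l}(T)+2$, i.e. $\mathrm{l}(S)=\mathrm{l}(T)+2$, which rearranges to $\mathrm{l}(S\cup\{\mathrm{h}(S)\})=\mathrm{l}(T)=\mathrm{l}(S)-2$, as claimed. The only subtle point — the part I expect to require the most care — is confirming that $\mathrm{h}(S)$ is genuinely a \emph{minimal} generator of $T$ and that $\mathrm{F}(T)=\mathrm{F}(S)=F$; both are essentially restatements of Lemma~\ref{lem6}(2) and Lemma~\ref{lem7} once one notices $S=T\setminus\{\mathrm{h}(S)\}$, but the bookkeeping about $\mathrm{h}(S)\neq F$ and $\mathrm{h}(S)\neq\tfrac{F}{2}$ must be handled cleanly so that Lemma~\ref{lem8}'s hypothesis "$x<\mathrm{F}(S\setminus\{x\})$" — here "$\mathrm{h}(S)<\mathrm{F}(T)$" — is unambiguously satisfied. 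Everything else is a one-line substitution.
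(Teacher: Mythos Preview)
Your proposal is correct and follows essentially the same route as the paper: the paper's argument (given in the paragraph immediately preceding the lemma) also sets $T=S\cup\{\mathrm{h}(S)\}$, observes via Lemma~\ref{lem6} that $T\in\mathcal{L}(F)$ with $\mathrm{h}(S)$ a minimal generator of $T$ below $\mathrm{F}(T)$, and then applies Lemma~\ref{lem8}. Your only addition is making explicit the appeal to Lemma~\ref{lem7} for the minimal-generator claim, which the paper leaves as ``clear''.
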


In the following result we show how to obtain all the $l$-semigroups when we know the $(l-2)$-semigroups set.

\begin{proposition}\label{prop10}
	If $S\in \mathscr{S}(F)$, then the following conditions are equivalent.
	\begin{enumerate}
		\item \label{prop10-1} $\mathrm{l}(S)=l$ and $\,l\geq2$.
		\item \label{prop10-2} There exist $\,T\in \mathscr{S}(F)$ and $x\in \mathrm{msg}(T)$ such that $\,\mathrm{l}(T)=l-2$, $\frac{F}{2}<x<F$, and $S=T\setminus \{x\}$.
	\end{enumerate}
\end{proposition}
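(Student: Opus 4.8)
The plan is to prove the two implications separately, using the lemmas already assembled, so that the proposition becomes essentially a packaging of Lemmas~\ref{lem7}, \ref{lem8}, and \ref{lem9}.

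\medskip

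\noindent\textbf{Proof of \eqref{prop10-1} $\Rightarrow$ \eqref{prop10-2}.} Assume $S\in\mathcal{L}(F)$ with $\mathrm{l}(S)=l\geq 2$. By Proposition~\ref{prop3}, $S$ is not irreducible, so by Lemma~\ref{lem6} the element $h=\mathrm{h}(S)=\max\big(\mathrm{L}(S)\big)$ exists and $T:=S\cup\{h\}\in\mathcal{L}(F)$. By Lemma~\ref{lem9}, $\mathrm{l}(T)=\mathrm{l}(S)-2=l-2$. Now I would set $x:=h$ and verify the three required properties of $x$. First, $x\in\mathrm{msg}(T)$: since $S=T\setminus\{x\}$ is a numerical semigroup, Lemma~\ref{lem7} gives $x\in\mathrm{msg}(T)$. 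Second, $S=T\setminus\{x\}$ by construction. Third, $\frac{F}{2}<x<F$: since $x\in\mathrm{L}(S)$ we have $x\notin S$, $F-x\notin\mathrm{N}(S)$, and $x\neq\frac{F}{2}$; because $x$ is a gap of $S$ with $x\le F=\mathrm{F}(S)$ and $x\neq F$ (as $F\notin T$ but $x\in T$), we get $x<F$; and the defining condition $F-x\notin\mathrm{N}(S)$ together with $x\notin S$ and $x\neq\frac{F}{2}$ forces $x>\frac{F}{2}$ (if $x<\frac{F}{2}$ then $F-x>\frac{F}{2}$, and one checks from the definition of $\mathrm{L}$ and the partition ${\mathbb N}=S\mathbin{\mathaccent\cdot\cup}\mathrm{H}(S)\mathbin{\mathaccent\cdot\cup}\mathrm{L}(S)$ that the smaller of a complementary pair of gaps lies in $\mathrm{H}(S)$, so $F-x\in\mathrm{N}(S)$, a contradiction). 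This gives \eqref{prop10-2}.

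\medskip

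\noindent\textbf{Proof of \eqref{prop10-2} $\Rightarrow$ \eqref{prop10-1}.} Assume there are $T\in\mathcal{L}(F)$ and $x\in\mathrm{msg}(T)$ with $\mathrm{l}(T)=l-2$, $\frac{F}{2}<x<F$, and $S=T\setminus\{x\}$. By Lemma~\ref{lem7}, $S$ is a numerical semigroup. Since $x<F$ and $x\neq F$, removing $x$ does not change the Frobenius number, so $\mathrm{F}(S)=F$ and $S\in\mathcal{L}(F)$. Moreover $x\in\mathrm{msg}(T)$ and $x<F=\mathrm{F}(T)$, so Lemma~\ref{lem8} yields $\mathrm{l}(S)=\mathrm{l}(T\setminus\{x\})=\mathrm{l}(T)+2=l$. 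Finally $l=\mathrm{l}(T)+2\geq 2$. This establishes \eqref{prop10-1} and completes the proof.

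\medskip

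\noindent The main obstacle I expect is the bookkeeping in the forward direction showing $x=\mathrm{h}(S)$ genuinely satisfies $\frac{F}{2}<x<F$ rather than just $x\neq\frac{F}{2}$; this rests on the fact that in the disjoint decomposition ${\mathbb N}=S\mathbin{\mathaccent\cdot\cup}\mathrm{H}(S)\mathbin{\mathaccent\cdot\cup}\mathrm{L}(S)$, for any gap $y\notin S$ exactly one of $y,F-y$ can lie in $\mathrm{L}(S)$ and it is the larger one, which is why elements of $\mathrm{L}(S)$ exceed $\frac{F}{2}$. Everything else is a direct application of the preceding lemmas.
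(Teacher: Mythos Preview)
Your overall strategy matches the paper's exactly: for \eqref{prop10-1}$\Rightarrow$\eqref{prop10-2} take $T=S\cup\{\mathrm{h}(S)\}$ and $x=\mathrm{h}(S)$, invoking Lemmas~\ref{lem6}, \ref{lem7}, \ref{lem9}; for the converse invoke Lemma~\ref{lem8}. The paper in fact spends fewer words, simply asserting that $\mathrm{h}(S)$ is a minimal generator of $T$ with $\frac{F}{2}<\mathrm{h}(S)<F$ (this was set up in the paragraph preceding Lemma~\ref{lem9}).

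There is, however, a genuine slip in your justification that $x=\mathrm{h}(S)>\frac{F}{2}$. You claim that ``the smaller of a complementary pair of gaps lies in $\mathrm{H}(S)$'' and, in your closing remark, that ``exactly one of $y,F-y$ can lie in $\mathrm{L}(S)$ and it is the larger one''. Both statements are false. By definition $\mathrm{L}(S)=\{y\in{\mathbb N}\setminus S\mid F-y\notin\mathrm{N}(S)\}$; if $y$ and $F-y$ are \emph{both} gaps (with $0<y<F$), then $F-y\notin\mathrm{N}(S)$ and $F-(F-y)=y\notin\mathrm{N}(S)$, so \emph{both} $y$ and $F-y$ belong to $\mathrm{L}(S)$, not $\mathrm{H}(S)$. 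The correct argument is precisely this symmetry: $\mathrm{L}(S)$ is stable under $y\mapsto F-y$, hence $F-\mathrm{h}(S)\in\mathrm{L}(S)$, so $F-\mathrm{h}(S)\leq\mathrm{h}(S)$, i.e.\ $\mathrm{h}(S)\geq\frac{F}{2}$; equality would force $\mathrm{L}(S)=\{\frac{F}{2}\}$ and $\mathrm{l}(S)\leq 1$, contradicting $l\geq 2$. With this correction your proof is complete and coincides with the paper's.
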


\begin{proof}
	\textit{(\ref{prop10-1}.} $\Rightarrow$ \textit{\ref{prop10-2}.)} Since $l\geq2$, then there exists $\mathrm{h}(S)$. Therefore, $T=S\cup \{\mathrm{h}(S)\} \in \mathscr{S}(F)$ and $\mathrm{h}(S)$ is a minimal generator of $T$ such that $\frac{F}{2}<\mathrm{h}(S)<F$. Moreover, from Lemma~\ref{lem9}, $\mathrm{l}(T)=\mathrm{l}(S)-2$. In order to finish the proof, it is enough to observe that $S=T\setminus \{\mathrm{h}(S)\}$.
	
	\textit{(\ref{prop10-2}.} $\Rightarrow$ \textit{\ref{prop10-1}.)} This implication is an immediate consequence of Lemma~\ref{lem8}.
\end{proof}

Let us observe that, as a consequence of Lemma~\ref{lem8} and Propositions~\ref{prop3} and \ref{prop10}, if $Q$ is a numerical semigroup with $\mathrm{l}(Q)=2$, then there exist a symmetric numerical semigroup $P$ and a number $x\in \mathrm{msg}(P)$ such that $x<\mathrm{F}(P)$ and $Q=P\setminus \{x\}$. Now, using the study about URSY-semigroups made in \cite{colloquium}, we obtain information about the pseudo-Frobenius numbers of a $2$-semigroup.

Recalling that a numerical semigroup $S$ is an \emph{URSY-semigroup} if there exists a symmetric numerical semigroup $T$ and an $x\in\mathrm{msg}(T)$ such that $S=T\setminus \{x\}$, then we can assert that every $2$-semigroup is an URSY-semigroup. However, the converse is not true. For example, $\langle 2,5 \rangle \setminus \{5\}=\langle 2,7 \rangle$ is an URSY-semigroup and $\mathrm{l}(\langle 2,7 \rangle)=0$.

The next result can be easily deduced from Theorem~2.2 of \cite{colloquium}. Recall that, if $S$ is a numerical semigroup, then $\mathrm{m}(S)=\min(S\setminus \{0\})$ is known as the \emph{multiplicity} of $S$.

\begin{proposition}\label{prop11}
	Let $S$ be a numerical semigroup. Then $\mathrm{l}(S)=2$ if and only if $S$ is an URSY-semigroup and $\mathrm{m}(S)\geq 3$.
\end{proposition}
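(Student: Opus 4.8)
The plan is to prove the two implications separately, using Proposition~\ref{prop10} to convert the statement about $\mathrm{l}(S)=2$ into a statement about removing a single minimal generator from a symmetric numerical semigroup.

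\medskip

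\noindent\textbf{($\Rightarrow$)} Suppose $\mathrm{l}(S)=2$. By Proposition~\ref{prop10} (with $l=2$) there exist $T\in\mathcal{L}(F)$ with $\mathrm{l}(T)=0$ and $x\in\mathrm{msg}(T)$ with $\frac{F}{2}<x<F$ such that $S=T\setminus\{x\}$. By Proposition~\ref{prop3}, $\mathrm{l}(T)=0$ means $T$ is symmetric, so $S$ is an URSY-semigroup by definition. It remains to show $\mathrm{m}(S)\geq 3$. If $\mathrm{m}(S)=1$ then $S={\mathbb N}$, impossible since $F\geq 1$. If $\mathrm{m}(S)=2$ then $S=\langle 2,2k+1\rangle$ for some $k$, which is symmetric (indeed every numerical semigroup of multiplicity $2$ is symmetric, e.g.\ by Lemma~\ref{lem1} since its genus is $\frac{F+1}{2}$), contradicting $\mathrm{l}(S)=2$. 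Hence $\mathrm{m}(S)\geq 3$.

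\medskip

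\noindent\textbf{($\Leftarrow$)} Suppose $S$ is an URSY-semigroup with $\mathrm{m}(S)\geq 3$; write $S=T\setminus\{x\}$ with $T$ symmetric and $x\in\mathrm{msg}(T)$. The key point is to locate $x$ relative to $\mathrm{F}(T)$. Since $\mathrm{m}(S)\geq 3$ and $S=T\setminus\{x\}$, we have $\mathrm{m}(T)\geq 3$ as well (removing $x$ cannot increase the multiplicity, and if $\mathrm{m}(T)\leq 2$ then $T=\langle 2, q\rangle$ or $T={\mathbb N}$, forcing $\mathrm{m}(S)\leq 2$ or a contradiction); in particular $x\neq\mathrm{m}(T)$ is not the only obstruction — the real work is showing $x<\mathrm{F}(T)$. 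If instead $x>\mathrm{F}(T)$, then $\mathrm{F}(S)=\mathrm{F}(T\setminus\{x\})$ would be $\geq x>\mathrm{F}(T)$, and one checks (as in Lemma~\ref{lem6}-style arguments, or directly) that removing a minimal generator larger than the Frobenius number produces a numerical semigroup that is again symmetric with $\mathrm{l}=0$ — but we must instead argue via the multiplicity hypothesis. Here is the clean route: by Theorem~2.2 of \cite{colloquium}, the URSY-semigroups with $\mathrm{m}(S)\geq 3$ are exactly those arising as $T\setminus\{x\}$ with $x<\mathrm{F}(T)$ (the multiplicity condition rules out the degenerate cases $\mathrm{m}(S)\leq 2$ where $x$ could be $>\mathrm{F}(T)$, as in the example $\langle 2,5\rangle\setminus\{5\}=\langle 2,7\rangle$). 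Once $x<\mathrm{F}(T)$ is established, Lemma~\ref{lem8} applied to $T$ gives $\mathrm{l}(S)=\mathrm{l}(T\setminus\{x\})=\mathrm{l}(T)+2=0+2=2$, using $\mathrm{l}(T)=0$ from Proposition~\ref{prop3}.

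\medskip

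The main obstacle is the bookkeeping in the ($\Leftarrow$) direction: showing that the multiplicity hypothesis $\mathrm{m}(S)\geq 3$ is precisely what forces the removed generator $x$ to lie below $\mathrm{F}(T)$, rather than above it. The clean way to handle this is to invoke the characterization from Theorem~2.2 of \cite{colloquium} as stated, which already encodes this dichotomy; alternatively one argues by cases on whether $x\gtrless\mathrm{F}(T)$ and observes that the case $x>\mathrm{F}(T)$ can only occur when $\mathrm{m}(T)=2$ (since a minimal generator exceeding the Frobenius number, together with the rest of $\mathrm{msg}(T)$ generating a semigroup with a \emph{larger} Frobenius number, is very restrictive), hence $\mathrm{m}(S)=2$, contradicting the hypothesis. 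Everything else is a direct application of Proposition~\ref{prop10}, Proposition~\ref{prop3}, and Lemma~\ref{lem8}.
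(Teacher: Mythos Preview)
Your proof and the paper's take the same route in the end: the paper gives no self-contained argument at all, simply stating that the result ``can be easily deduced from Theorem~2.2 of \cite{colloquium}'', and your ``clean route'' for ($\Leftarrow$) is exactly this same citation. Your ($\Rightarrow$) direction, using Proposition~\ref{prop10} and Proposition~\ref{prop3} together with the observation that multiplicity~$2$ forces symmetry, is a correct and more detailed argument than anything the paper writes down.

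Where your write-up stumbles is in the self-contained alternatives you sketch for ($\Leftarrow$). The assertion ``removing $x$ cannot increase the multiplicity'' is false precisely when $x=\mathrm{m}(T)$: take $T=\langle 2,3\rangle$ (symmetric, $\mathrm{F}(T)=1$) and $x=2$; then $S=T\setminus\{2\}=\langle 3,4,5\rangle$ has $\mathrm{m}(S)=3>2=\mathrm{m}(T)$. The same example breaks your closing case analysis: here $x=2>\mathrm{F}(T)=1$ and $\mathrm{m}(T)=2$, yet $\mathrm{m}(S)=3$, so the hypothesis $\mathrm{m}(S)\geq 3$ is \emph{not} contradicted and your argument does not terminate. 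Your intermediate claim that removing a minimal generator above $\mathrm{F}(T)$ always yields a symmetric semigroup is also incorrect in general: the same example gives $\mathrm{l}(\langle 3,4,5\rangle)=1$. In short, none of your proposed do-it-yourself arguments for ($\Leftarrow$) actually close the gap; only the appeal to Theorem~2.2 of \cite{colloquium} does --- and that is precisely the paper's own approach. (This example also shows that whatever Theorem~2.2 of \cite{colloquium} says, it must handle the pair $T=\langle 2,3\rangle$, $x=2$ specially, so you cannot hope to replace that citation by the naive case split you outline.)
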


The following result is obtained from Lemmas~2.3 and 2.7 of \cite{colloquium} and the proof of Proposition~\ref{prop10}.

\begin{proposition}\label{prop12}
	If $S$ is a $2$-semigroup, then
	\[\left\{\mathrm{F}(S),\mathrm{h}(S) \right\} \subseteq \mathrm{PF}(S) \subseteq \{\mathrm{F}(S),\mathrm{h}(S),\mathrm{F}(S)-\mathrm{h}(S) \}.\]
	Moreover, $\mathrm{F}(S)-\mathrm{h}(S)\in\mathrm{PF}(S)$ if and only if $\,2\mathrm{h}(S)-\mathrm{F}(S)\notin S$.
\end{proposition}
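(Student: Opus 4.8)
The statement has two parts: a containment sandwich for $\mathrm{PF}(S)$, and a characterization of when the third candidate $\mathrm{F}(S)-\mathrm{h}(S)$ actually lies in $\mathrm{PF}(S)$. Throughout I write $F=\mathrm{F}(S)$ and $h=\mathrm{h}(S)$, and I use the structural fact (recorded just before Proposition~\ref{prop11}, via Lemma~\ref{lem8} and Proposition~\ref{prop10}) that a $2$-semigroup $S$ arises as $S=P\setminus\{x\}$ for a symmetric numerical semigroup $P$ with $\mathrm{F}(P)=F$ and $x\in\mathrm{msg}(P)$ with $x<F$; in fact $P=S\cup\{h\}$ and $x=h$, so $P=S\cup\{h\}$ is symmetric with $h\in\mathrm{msg}(P)$. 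This is exactly the URSY-presentation, so the cited Lemmas~2.3 and~2.7 of \cite{colloquium} apply. The plan is to first establish the two ``forced'' members $F,h\in\mathrm{PF}(S)$, then bound $\mathrm{PF}(S)$ from above, and finally analyze the borderline element.

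First I would show $\{F,h\}\subseteq\mathrm{PF}(S)$. That $F\in\mathrm{PF}(S)$ is immediate since $F\notin S$ and $F+s>F$ forces $F+s\in S$ for every $s\in S\setminus\{0\}$ (as $F$ is the largest gap). For $h$: we have $h\notin S$, and I must check $h+s\in S$ for all $s\in S\setminus\{0\}$. Equivalently, since $P=S\cup\{h\}$ is a numerical semigroup and $h\in\mathrm{msg}(P)$, the set $P\setminus\{h\}=S$ being a numerical semigroup together with $h+s\in P$ for all $s\in P\setminus\{0,\text{contributions to }h\}$ must be combined with the observation that $h+s\neq h$ for $s\neq 0$ and $h+s\in P$ (because $P$ is closed); the only way $h+s\notin S$ would be $h+s=h$, impossible, so $h+s\in S$. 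Thus $h\in\mathrm{PF}(S)$. This is where invoking Lemma~2.3 of \cite{colloquium} streamlines things, since it presumably states exactly that $h\in\mathrm{PF}(S)$ for the URSY-construction.

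Next, the upper bound $\mathrm{PF}(S)\subseteq\{F,h,F-h\}$. Let $g\in\mathrm{PF}(S)$. Since $g\notin S$, $g$ is a gap of $S$; its gaps are the gaps of $P=S\cup\{h\}$ together with $h$ itself. Since $P$ is symmetric, $\mathrm{PF}(P)=\{F\}$ by Lemma~\ref{lem4}(a), so every gap $y$ of $P$ with $y\neq F$ satisfies $y+s\notin P$ for some $s\in P\setminus\{0\}$, hence a fortiori $y+s\notin S$ unless that sum equals $h$. Therefore a gap $g\neq F, g\neq h$ of $S$ can be a pseudo-Frobenius number of $S$ only if the only ``bad'' sums $g+s$ (those landing outside $P$) all equal $h$; as $s$ ranges over $S\setminus\{0\}$, $g+s$ is increasing, so at most one value of $s$ can give $g+s=h$, namely $s=h-g$, and this requires $h-g\in S$. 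Combined with $g+s'\in S\subseteq P$ for all other $s'$, this pins down $g=F-h$ once we also use that $g$ must be a gap and the symmetry of $P$ forces the complementary gap structure; here Lemma~2.7 of \cite{colloquium} should give precisely that the only possible extra pseudo-Frobenius number is $F-h$. I expect this step — correctly ruling out all gaps other than $F,h,F-h$ — to be the main obstacle, and the cleanest route is to quote the $\mathrm{PF}$-computation for URSY-semigroups from \cite{colloquium} rather than reprove it.

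Finally, the criterion: $F-h\in\mathrm{PF}(S)\iff 2h-F\notin S$. Write $g=F-h$; note $F/2<h<F$ gives $0<g<F/2<h$, so $g$ is a gap of $P$ and hence a gap of $S$, and $g\neq h$. By definition $g\in\mathrm{PF}(S)$ iff $g+s\in S$ for all $s\in S\setminus\{0\}$. Since $P$ is symmetric with Frobenius number $F$, for the complementary gap $g=F-h$ and any $s$, $g+s\in P$ holds iff $F-(g+s)=h-s$ is not a small element situation forced out — more directly, because $P=S\cup\{h\}$, the only element of $P$ not in $S$ is $h$, so $g+s\in S\iff g+s\in P$ and $g+s\neq h$. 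As $s$ ranges over $S\setminus\{0\}$, the equation $g+s=h$ has the unique solution $s=h-g=2h-F$. Hence $g+s\in S$ for all $s\in S\setminus\{0\}$ fails exactly when $2h-F\in S$ (and then $s=2h-F$ is a genuine nonzero element of $S$, using $2h-F>0$), and it holds for all $s$ — using that $g=F-h\in\mathrm{PF}(P)$'s complement behaves well, i.e. $g+s\in P$ always, which follows because $F-(g+s)=h-s<h\le$ a gap only if... — this last sub-claim ($g+s\in P$ for all $s\in P\setminus\{0\}$, equivalently $g\in\mathrm{PF}(P)\cup(P)$, but $g\notin P$, so really we need $g+s\in P$) requires the symmetry of $P$: since $h\in\mathrm{PF}(P)$? no — rather, since $F-g=h$ and $P$ symmetric means $z\in P\iff F-z\notin P$ for $z\neq F/2$; applying this, $g+s\in P\iff F-g-s=h-s\notin P$, and $h-s<h$ with $h\notin P$... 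I would instead simply argue: $g=F-h$ and $h$ are both gaps with $g+h=F$; for $s\in S$, if $g+s\notin P$ then $g+s$ is a gap of $P$ bigger than... no, it could be smaller. The clean finish is: $g\in\mathrm{PF}(S)$ iff ($g$ is a gap of $S$, which holds) and for every $s\in S\setminus\{0\}$, $g+s\in S$; since $S=P\setminus\{h\}$ and $g\in\mathrm{PF}(P)$'s "shadow" — concretely because $h\in\mathrm{PF}(S)$ and $g=F-h$, the pair is handled by Lemma~2.7 of \cite{colloquium}, which yields the stated equivalence. So the proof concludes by assembling: (i) $F,h\in\mathrm{PF}(S)$; (ii) no gap outside $\{F,h,F-h\}$ lies in $\mathrm{PF}(S)$; (iii) $F-h\in\mathrm{PF}(S)\iff 2h-F\notin S$.
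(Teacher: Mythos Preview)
Your fallback---quote Lemmas~2.3 and~2.7 of \cite{colloquium} together with the identification $P=S\cup\{h\}$, $x=h$ from the proof of Proposition~\ref{prop10}---is exactly what the paper does; there is no independent argument in the paper beyond that citation.

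That said, your attempted direct arguments for the upper inclusion and for the final equivalence both stall, and in each case the missing idea is the same: exploit the \emph{symmetry} of $P$ together with $h\in\mathrm{msg}(P)$.

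For the inclusion $\mathrm{PF}(S)\subseteq\{F,h,F-h\}$: take $g\in\mathrm{PF}(S)$ with $g\neq F$ and $g\neq h$. Then $g$ is a gap of $P$. Since $P$ is symmetric with Frobenius number $F$, the complement $F-g$ lies in $P\setminus\{0\}$. If $F-g\in S$, then $g+(F-g)=F\notin S$ contradicts $g\in\mathrm{PF}(S)$. Hence $F-g\in P\setminus S=\{h\}$, so $g=F-h$. Your sketch tried to track which sums $g+s$ fall outside $P$; that is the wrong direction and does not isolate $g$.

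For the equivalence: with $g=F-h$ you correctly reduce $g\in\mathrm{PF}(S)$ to the question of whether $g+s\in S$ for all $s\in S\setminus\{0\}$, and you see that $g+s=h$ happens exactly when $s=2h-F$. What you could not finish is why $g+s\in P$ for every $s\in S\setminus\{0\}$. If $s>h$ then $g+s>F$ and we are done. If $0<s<h$ and $g+s\notin P$, then symmetry of $P$ gives $F-(g+s)=h-s\in P\setminus\{0\}$; but then $h=(h-s)+s$ with both summands nonzero in $P$, contradicting $h\in\mathrm{msg}(P)$. So the only possible failure of $g+s\in S$ is $g+s=h$, i.e.\ $s=2h-F$, and since $2h-F>0$ (because $h>F/2$) this is a genuine nonzero element precisely when $2h-F\in S$. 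That yields the equivalence cleanly.
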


An immediate consequence of the above proposition is the next result.

\begin{corollary}\label{cor13}
	If $S$ is a $2$-semigroup, then $\mathrm{t}(S)\in\{2,3\}$. Moreover, $\mathrm{t}(S)=2$ if and only if $\,2\mathrm{h}(S)-\mathrm{F}(S)\in S$.
\end{corollary}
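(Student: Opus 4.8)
The plan is to deduce this corollary directly from Proposition~\ref{prop12}, treating the two possible values of the type as a case analysis driven by whether $\mathrm{F}(S)-\mathrm{h}(S)$ belongs to $\mathrm{PF}(S)$. First I would invoke Proposition~\ref{prop12} to get the sandwich $\{\mathrm{F}(S),\mathrm{h}(S)\}\subseteq\mathrm{PF}(S)\subseteq\{\mathrm{F}(S),\mathrm{h}(S),\mathrm{F}(S)-\mathrm{h}(S)\}$. A small but essential remark here is that $\mathrm{F}(S)\neq\mathrm{h}(S)$: indeed $\mathrm{h}(S)\in\mathrm{L}(S)$ and, by definition of $\mathrm{h}(S)$ as an element of $\mathrm{N}(S)$'s complement with $\mathrm{F}(S)-\mathrm{h}(S)\notin S$ and $\mathrm{h}(S)\neq\frac{\mathrm{F}(S)}{2}$, one has $\mathrm{h}(S)<\mathrm{F}(S)$ since $\mathrm{F}(S)$ is the largest gap. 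Hence $\{\mathrm{F}(S),\mathrm{h}(S)\}$ already has cardinality $2$, so $\mathrm{t}(S)=|\mathrm{PF}(S)|\in\{2,3\}$, which gives the first assertion.

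For the second assertion, the case analysis is immediate from the structure of the sandwich. If $\mathrm{F}(S)-\mathrm{h}(S)\in\{\mathrm{F}(S),\mathrm{h}(S)\}$, then the upper and lower bounds coincide and $\mathrm{PF}(S)=\{\mathrm{F}(S),\mathrm{h}(S)\}$ automatically; otherwise $\mathrm{F}(S)-\mathrm{h}(S)$ is a genuinely new element, and then $\mathrm{t}(S)=2$ exactly when $\mathrm{F}(S)-\mathrm{h}(S)\notin\mathrm{PF}(S)$. By the ``moreover'' clause of Proposition~\ref{prop12}, $\mathrm{F}(S)-\mathrm{h}(S)\in\mathrm{PF}(S)$ if and only if $2\mathrm{h}(S)-\mathrm{F}(S)\notin S$; negating, $\mathrm{F}(S)-\mathrm{h}(S)\notin\mathrm{PF}(S)$ if and only if $2\mathrm{h}(S)-\mathrm{F}(S)\in S$. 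Combining the two cases, $\mathrm{t}(S)=2$ (equivalently $\mathrm{PF}(S)=\{\mathrm{F}(S),\mathrm{h}(S)\}$) if and only if $2\mathrm{h}(S)-\mathrm{F}(S)\in S$.

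The only point requiring a little care is the degenerate possibility that $\mathrm{F}(S)-\mathrm{h}(S)$ coincides with $\mathrm{F}(S)$ or with $\mathrm{h}(S)$. The first would force $\mathrm{h}(S)=0\in S$, impossible since $\mathrm{h}(S)$ is a gap; the second would force $\mathrm{h}(S)=\frac{\mathrm{F}(S)}{2}$, which is explicitly excluded in the definition of $\mathrm{h}(S)$. So in fact $\mathrm{F}(S)-\mathrm{h}(S)$ is always distinct from both, and one can skip the trivial sub-case entirely; the equivalence then reads off directly from Proposition~\ref{prop12}. I do not anticipate any real obstacle here — the corollary is essentially a repackaging of the preceding proposition — so the ``hard part'' is merely making explicit the observation that $\mathrm{F}(S)-\mathrm{h}(S)\notin\{\mathrm{F}(S),\mathrm{h}(S)\}$, which justifies that the type is exactly $3$ precisely when $\mathrm{F}(S)-\mathrm{h}(S)\in\mathrm{PF}(S)$.
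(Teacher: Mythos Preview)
Your proposal is correct and follows exactly the approach the paper intends: the paper states this corollary as ``an immediate consequence'' of Proposition~\ref{prop12}, and you have simply spelled out the details of that immediate consequence, including the verification that $\mathrm{F}(S)$, $\mathrm{h}(S)$, and $\mathrm{F}(S)-\mathrm{h}(S)$ are pairwise distinct.
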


\begin{remark}
	From Remark~\ref{rem-asns} and Proposition~\ref{prop12}, we have that a $2$-semigroup is almost symmetric if and only if $\,2\mathrm{h}(S)-\mathrm{F}(S)\notin S$.
\end{remark}

Let us illustrate the previous results with an example.

\begin{example}\label{exmp14}
	From Lemma~\ref{lem1}, we easily deduce that $S=\langle 7,8,9,10,11,12 \rangle$ is a symmetric numerical semigroup with Frobenius number $13$. Therefore, by applying Proposition~\ref{prop10}, we have that $\mathrm{l}(S\setminus \{10\})=2$ and that $\mathrm{h}(S\setminus \{10\})=10$. Then, since $2\mathrm{h}(S\setminus \{10\})-\mathrm{F}(S\setminus \{10\})=7 \in S\setminus \{10\}$, from Proposition~\ref{prop12} (or Corollary~\ref{cor13}) we conclude that $\mathrm{PF}(S\setminus \{10\})=\{10,13\}$.
\end{example}

Now our purpose is to study the numerical semigroups $S$ such that $\mathrm{l}(S)=3$. As a consequence of Lemma~\ref{lem8} and Propositions~\ref{prop3} and \ref{prop10}, we have that, if $Q$ is a $3$-semigroup, then there exists a pseudo-symmetric numerical semigroup $P$ and an $x\in\mathrm{msg}(P)$ such that $\mathrm{F}(P)=\mathrm{F}(Q)$, $x<\mathrm{F}(P)$, and $Q=P\setminus\{x\}$.

We use the study made in \cite{bolletino} in order to obtain results about the pseudo-Frobenius numbers of the $3$-semigroups.

Let us recall that a numerical semigroup $S$ is an URPSY-semigroup if there exists a pseudo-symmetric numerical semigroup $T$ and an $x\in\mathrm{msg}(T)$ such that $S=T\setminus \{x\}$.

Let us observe that, if $S$ is a $3$-semigroup, then $S$ is an URPSY-semigroup. However, $\langle 3,4,5 \rangle \setminus \{4\}=\langle 3,5,7 \rangle$ is an URPSY-semigroup and $\mathrm{l}(\langle 3,5,7 \rangle)=1$. Consequently, there exist URPSY-semigroups that are not $3$-semigroups.

The following result can be deduced from Proposition~23 of \cite{bolletino}.

\begin{proposition}\label{prop15}
	Let $S$ be a numerical semigroup. Then $\mathrm{l}(S)=3$ if and only if $S$ is an URPSY-semigroup and does not belong to the set
	\begin{align*}
		U=\big\{\langle 3,5,7 \rangle,\langle 4,5,6,7 \rangle,\langle 4,5,11 \rangle \big\} \cup \big\{ \langle 3,x+3 \rangle \mid x\in{\mathbb N}\setminus \{0\}, \ 3\nmid x \big\} \, \cup \\
		\big\{ \langle m,m+1,\ldots,2m-3 \rangle \mid m \in {\mathbb N}, \ m\geq5 \big\}.
	\end{align*}
\end{proposition}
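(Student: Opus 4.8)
The plan is to prove the two implications via the reduction in Proposition~\ref{prop10}, the genus identities in Lemma~\ref{lem2}, and the pseudo-symmetry criterion in Lemma~\ref{lem1}. One direction is essentially free: that a $3$-semigroup is an URPSY-semigroup has already been recorded just before the statement (it is Lemma~\ref{lem8} combined with Propositions~\ref{prop3} and \ref{prop10}), and that no member of $U$ is a $3$-semigroup reduces to a direct computation of $\mathrm{l}$ on the listed semigroups through Lemma~\ref{lem2}(3).

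For the converse, take an URPSY-semigroup $S$ and a witness $S=T\setminus\{x\}$, with $T$ pseudo-symmetric and $x\in\mathrm{msg}(T)$; note $x\neq\mathrm{F}(T)$ because $x\in T$. If $x<\mathrm{F}(T)$, then $\mathrm{F}(S)=\mathrm{F}(T)$ and Lemma~\ref{lem8} gives $\mathrm{l}(S)=\mathrm{l}(T)+2=3$, using $\mathrm{l}(T)=1$ from Proposition~\ref{prop3}. If $x>\mathrm{F}(T)$, then every integer larger than $x$ lies in $T$ and hence in $S$, so $\mathrm{F}(S)=x$ and $\mathrm{g}(S)=\mathrm{g}(T)+1$; substituting $\mathrm{g}(T)=\tfrac{\mathrm{F}(T)+2}{2}$ from Lemma~\ref{lem1} into Lemma~\ref{lem2}(3) gives
\[
\mathrm{l}(S)=2\mathrm{g}(S)-\mathrm{F}(S)-1=\mathrm{F}(T)+3-x .
\]
Since $x\geq\mathrm{F}(T)+1$, this forces $\mathrm{l}(S)\leq 2$, and since $\mathrm{l}(S)\geq 0$ it also forces $x-\mathrm{F}(T)\in\{1,2,3\}$. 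Hence an URPSY-semigroup $S$ satisfies $\mathrm{l}(S)\neq 3$ precisely when \emph{every} representation $S=T\setminus\{x\}$ (with $T$ pseudo-symmetric, $x\in\mathrm{msg}(T)$) has $1\leq x-\mathrm{F}(T)\leq 3$, in which case $\mathrm{l}(S)=\mathrm{F}(T)+3-x$. So it remains to identify the family of such exceptional $S$ with $U$.

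For that I would first observe that a minimal generator $x>\mathrm{F}(T)$ must lie in $(\mathrm{F}(T),\mathrm{F}(T)+\mathrm{m}(T)]$, since otherwise $x-\mathrm{m}(T)>\mathrm{F}(T)$ belongs to $T$ and $x=\mathrm{m}(T)+(x-\mathrm{m}(T))$ is reducible. Combined with the number of small elements forced by pseudo-symmetry, $\mathrm{n}(T)=\tfrac{\mathrm{F}(T)}{2}$ (from Lemmas~\ref{lem1} and \ref{lem2}), this constrains $T$ severely; a case analysis on $\mathrm{m}(T)$ then pins $T$ down (in particular $\mathrm{m}(T)>\mathrm{F}(T)$ immediately forces $T=\langle 3,4,5\rangle$). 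The small-multiplicity cases, after one lists the admissible pairs $(T,x)$, produce the three sporadic semigroups together with the two-generated family $\langle 3,x+3\rangle$ with $3\nmid x$; and for $\mathrm{m}(T)=m\geq 5$ the count of small elements forces $T=\langle m,m+1,\dots,2m-3,2m-1\rangle$, whose only admissible generator is $2m-1$, and whose removal yields the interval $\langle m,m+1,\dots,2m-3\rangle$. Reading off $S=T\setminus\{x\}$ for each admissible pair then has to reproduce exactly the set $U$.

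The main obstacle is precisely this last enumeration: being certain that the pseudo-symmetric semigroups carrying a minimal generator within $3$ of their Frobenius number are exactly those described, and that the corresponding removals are tallied with neither omission nor repetition. Alternatively, one can bypass the enumeration by quoting the classification of URPSY-semigroups in Proposition~23 of \cite{bolletino} and transporting it through the identity $\mathrm{l}(S)=\mathrm{F}(T)+3-x$ established above.
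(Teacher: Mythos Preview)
The paper does not give a proof of this proposition at all: it simply records that the statement ``can be deduced from Proposition~23 of \cite{bolletino}''. Your proposal therefore goes well beyond what the paper supplies. The reduction you set up is correct and illuminating: distinguishing the cases $x<\mathrm{F}(T)$ and $x>\mathrm{F}(T)$, and in the latter case deriving $\mathrm{l}(S)=\mathrm{F}(T)+3-x$ from Lemma~\ref{lem2}(3), is exactly the computation that explains why only finitely many exceptional shapes arise. Your closing alternative---bypassing the enumeration by invoking Proposition~23 of \cite{bolletino}---is precisely the route the paper takes.

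One caution about your direct enumeration sketch: the assertion that for $\mathrm{m}(T)=m\geq 5$ the constraints force $T=\langle m,m+1,\dots,2m-3,2m-1\rangle$ is not automatic from the small-element count $\mathrm{n}(T)=\mathrm{F}(T)/2$ alone; there are pseudo-symmetric semigroups of multiplicity $m$ with larger Frobenius number, and you must also use that some minimal generator lies within $3$ of $\mathrm{F}(T)$ to rule them out. Since you flag this step as the main obstacle and immediately offer the citation alternative that the paper itself uses, the proposal is acceptable as written.
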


The next result is deduced from Lemmas~25, 26, and 27 of \cite{bolletino}.

\begin{proposition}\label{prop16}
	If $S$ is a $3$-semigroup, then 
	\begin{enumerate}
		\item $\left\{\mathrm{F}(S),\mathrm{h}(S) \right\} \subseteq \mathrm{PF}(S) \subseteq \left\{\mathrm{F}(S),\mathrm{h}(S),\frac{\mathrm{F}(S)}{2},\mathrm{F}(S)-\mathrm{h}(S) \right\};$
		\item $\frac{\mathrm{F}(S)}{2} \in \mathrm{PF}(S)$ if and only if $\,\mathrm{h}(S)-\frac{\mathrm{F}(S)}{2} \notin S$;
		\item $\mathrm{F}(S)-\mathrm{h}(S)\in\mathrm{PF}(S)$ if and only if $\,2\mathrm{h}(S)-\mathrm{F}(S)\notin S$.
	\end{enumerate}
\end{proposition}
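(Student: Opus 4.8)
The plan is to transfer the statement about $3$-semigroups to the language of URPSY-semigroups, where the referenced Lemmas~25, 26, 27 of \cite{bolletino} live, and then feed in the extra information that a $3$-semigroup carries. Let $S$ be a $3$-semigroup. As was observed after Lemma~\ref{lem9} (via Lemma~\ref{lem8} together with Propositions~\ref{prop3} and \ref{prop10}), there exist a pseudo-symmetric numerical semigroup $P$ and a minimal generator $x$ of $P$ with $x<\mathrm{F}(P)$ and $S=P\setminus\{x\}$; moreover $\mathrm{F}(S)=\mathrm{F}(P)=:F$. In particular $S$ is an URPSY-semigroup. The first task is to identify $\mathrm{h}(S)=\max\mathrm{L}(S)$ with the generator $x$ that was removed: arguing exactly as in the proof of Proposition~\ref{prop10}, from $S\in\mathcal{L}(F)$ with $\mathrm{l}(S)=3\geq 2$ we get $P=S\cup\{\mathrm{h}(S)\}\in\mathcal{L}(F)$ with $\mathrm{l}(P)=1$, i.e. $P$ is pseudo-symmetric, and $S=P\setminus\{\mathrm{h}(S)\}$ with $\mathrm{h}(S)\in\mathrm{msg}(P)$ and $\frac{F}{2}<\mathrm{h}(S)<F$. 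So in the notation of \cite{bolletino} the removed generator is precisely $\mathrm{h}(S)$, and $\frac F2<\mathrm h(S)<F$.

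Next I would invoke Lemmas~25, 26, 27 of \cite{bolletino}. These describe $\mathrm{PF}(P\setminus\{x\})$ for an URPSY-semigroup in terms of $F=\mathrm{F}(P)$, the generator $x$ removed, and $\frac F2$ (which is a pseudo-Frobenius number of the pseudo-symmetric $P$ by Lemma~\ref{lem4}(b)). Specializing those lemmas to our situation $x=\mathrm h(S)$, $\frac F2<\mathrm h(S)<F$, one obtains the chain $\{F,\mathrm h(S)\}\subseteq\mathrm{PF}(S)\subseteq\{F,\mathrm h(S),\frac F2,F-\mathrm h(S)\}$, which is item~(1); the containment $F\in\mathrm{PF}(S)$ is automatic since $F=\mathrm F(S)$, and $\mathrm h(S)\in\mathrm{PF}(S)$ follows because $\mathrm h(S)$ was a minimal generator of $P$ with every $P$-sum landing back in $P\setminus\{\mathrm h(S)\}=S$ except possibly a sum equal to $\mathrm h(S)$ itself, which cannot happen as $\mathrm h(S)\notin S$. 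Item~(2): an element $y\notin S$ lies in $\mathrm{PF}(S)$ iff $y+s\in S$ for all $s\in S\setminus\{0\}$; for $y=\frac F2$ this can only fail because $\frac F2+s=\mathrm h(S)$ for some $s\in S\setminus\{0\}$ (all other sums already lie in $P$ and are $\neq\mathrm h(S)$, hence in $S$), i.e. iff $\mathrm h(S)-\frac F2\in S$. Item~(3): likewise $y=F-\mathrm h(S)\notin S$ (since $\mathrm h(S)\in\mathrm L(S)$ means $F-\mathrm h(S)\notin\mathrm N(S)$, and $F-\mathrm h(S)<\frac F2<F$ so it is not in $S$ either), and $F-\mathrm h(S)\in\mathrm{PF}(S)$ fails exactly when $(F-\mathrm h(S))+s=\mathrm h(S)$ for some $s\in S\setminus\{0\}$, equivalently $2\mathrm h(S)-F\in S$; here one must also check $2\mathrm h(S)-F\geq 1$, which holds because $\mathrm h(S)>\frac F2$, and that no other obstruction arises, again because every sum $(F-\mathrm h(S))+s$ distinct from $\mathrm h(S)$ is forced into $P$ and is $\geq$ something $>F-\mathrm h(S)$ so it is not a gap of $P$.

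The step I expect to be the main obstacle is making the reduction to \cite{bolletino} genuinely airtight: one must verify that a $3$-semigroup $S$ corresponds, under the correspondence of \cite{bolletino}, to the \emph{same} data those lemmas are phrased in — in particular that the "$x$" in Proposition~23/Lemmas~25--27 of \cite{bolletino} is forced to equal $\mathrm h(S)$ and satisfies $\frac F2<x<F$, and that $S\notin U$ (Proposition~\ref{prop15}) does not interfere with the conclusions of those lemmas. Once the dictionary is pinned down, items~(2) and~(3) are the kind of membership-chasing arguments sketched above: the only subtlety is ruling out "spurious" failures of the pseudo-Frobenius condition, i.e. checking that among all sums $y+s$ ($s\in S\setminus\{0\}$) the unique one that can escape $S$ is the one equal to $\mathrm h(S)$, which rests on the fact that $P=S\cup\{\mathrm h(S)\}$ is already a numerical semigroup so $y+s\in P$ for the relevant $y\in\mathrm{PF}(P)$, and on the size constraints on $\mathrm h(S)$ relative to $\frac F2$ and $F$.
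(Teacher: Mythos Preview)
Your overall plan coincides with the paper's: the paper simply says the proposition ``is deduced from Lemmas~25, 26, and 27 of \cite{bolletino}'', and your reduction---identifying the removed generator with $\mathrm h(S)$ via the argument of Proposition~\ref{prop10} and then invoking those lemmas---is exactly the intended dictionary. The check that $\frac{F}{2}<\mathrm h(S)<F$ and $\mathrm h(S)\in\mathrm{msg}(P)$ is correct, and your direct verification of item~(2) using $\frac{F}{2}\in\mathrm{PF}(P)$ is fine.

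There is, however, a genuine gap in your direct argument for item~(3). You justify ``no other obstruction arises'' by appealing to ``$y+s\in P$ for the relevant $y\in\mathrm{PF}(P)$''. But $F-\mathrm h(S)$ is \emph{not} in $\mathrm{PF}(P)=\{F,\tfrac{F}{2}\}$ (Lemma~\ref{lem4}(b)), so you cannot conclude $(F-\mathrm h(S))+s\in P$ for free, and ``is $\geq$ something $>F-\mathrm h(S)$'' certainly does not prevent it from being a gap of $P$. The repair uses instead that $\mathrm h(S)\in\mathrm{msg}(P)$ together with pseudo-symmetry of $P$: suppose $s\in S\setminus\{0\}$ and $(F-\mathrm h(S))+s\notin P$. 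If $(F-\mathrm h(S))+s=\tfrac{F}{2}$ then $s=\mathrm h(S)-\tfrac{F}{2}\in S$, hence $2\mathrm h(S)-F=2s\in S$, so this obstruction is already subsumed by the condition in~(3). Otherwise, since $\mathrm L(P)=\{\tfrac{F}{2}\}$, the gap $(F-\mathrm h(S))+s$ lies in $\mathrm H(P)$, so $(F-\mathrm h(S))+s=F-p$ for some $p\in\mathrm N(P)$; then $\mathrm h(S)=p+s$ with $p,s\in P$, and minimality of $\mathrm h(S)$ in $P$ forces $p=0$, i.e.\ $s=\mathrm h(S)\notin S$, a contradiction. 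With this correction your direct proof of~(3) goes through; alternatively, simply citing \cite{bolletino} as the paper does suffices.
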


As a direct consequence of the above proposition, we have the following result.

\begin{corollary}\label{cor17}
	If $S$ is a $3$-semigroup, then $\mathrm{t}(S) \in \{2,3,4\}$. Moreover,
	\begin{itemize}
		\item[a)] the following conditions are equivalent.
		\begin{itemize}
			\item[a.1)] $\mathrm{t}(S)=2$.
			\item[a.2)] $\mathrm{h}(S)-\frac{\mathrm{F}(S)}{2} \in S$.
			\item[a.3)] $\mathrm{PF}(S)=\left\{\mathrm{F}(S),\mathrm{h}(S) \right\}$.
		\end{itemize}
		\item[b)] the following conditions are equivalent.
		\begin{itemize}
			\item[b.1)] $\mathrm{t}(S)=3$.
			\item[b.2)] $\mathrm{h}(S)-\frac{\mathrm{F}(S)}{2} \notin S$ and $\,2\mathrm{h}(S)-\mathrm{F}(S)\in S$.
			\item[b.3)] $\mathrm{PF}(S)=\left\{\mathrm{F}(S),\mathrm{h}(S),\frac{\mathrm{F}(S)}{2} \right\}$.
		\end{itemize}
		\item[c)] the following conditions are equivalent.
		\begin{itemize}
			\item[c.1)] $\mathrm{t}(S)=4$.
			\item[c.2)] $2\mathrm{h}(S)-\mathrm{F}(S)\notin S$.
			\item[c.3)] $\mathrm{PF}(S)=\left\{\mathrm{F}(S),\mathrm{h}(S),\frac{\mathrm{F}(S)}{2},\mathrm{F}(S)-\mathrm{h}(S) \right\}$.
		\end{itemize}
	\end{itemize}
\end{corollary}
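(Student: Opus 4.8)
The plan is to read the statement off Proposition~\ref{prop16} by a short case analysis on which of the two ``optional'' elements $\frac{\mathrm{F}(S)}{2}$ and $\mathrm{F}(S)-\mathrm{h}(S)$ of the superset in Proposition~\ref{prop16}(1) actually lie in $\mathrm{PF}(S)$. First I would record two preliminary remarks. Since $\mathrm{l}(S)=3$, part~3 of Lemma~\ref{lem2} gives $\mathrm{g}(S)=\frac{\mathrm{F}(S)+4}{2}$, so $\mathrm{F}(S)$ is even and $\frac{\mathrm{F}(S)}{2}\in{\mathbb Z}$. Also, from the definition of $\mathrm{h}(S)$ and the symmetry $x\in\mathrm{L}(S)\Leftrightarrow \mathrm{F}(S)-x\in\mathrm{L}(S)$ (as in the discussion preceding Lemma~\ref{lem9}) one gets $\frac{\mathrm{F}(S)}{2}<\mathrm{h}(S)<\mathrm{F}(S)$, hence $0<\mathrm{F}(S)-\mathrm{h}(S)<\frac{\mathrm{F}(S)}{2}<\mathrm{h}(S)<\mathrm{F}(S)$, so the four numbers appearing in Proposition~\ref{prop16}(1) are pairwise distinct. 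Combining this with the inclusions $\{\mathrm{F}(S),\mathrm{h}(S)\}\subseteq\mathrm{PF}(S)\subseteq\{\mathrm{F}(S),\mathrm{h}(S),\frac{\mathrm{F}(S)}{2},\mathrm{F}(S)-\mathrm{h}(S)\}$ already yields $\mathrm{t}(S)\in\{2,3,4\}$.

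The single arithmetic observation that makes the three equivalences as economical as stated is the identity $2\mathrm{h}(S)-\mathrm{F}(S)=2\bigl(\mathrm{h}(S)-\frac{\mathrm{F}(S)}{2}\bigr)$: since $S$ is closed under addition, $\mathrm{h}(S)-\frac{\mathrm{F}(S)}{2}\in S$ implies $2\mathrm{h}(S)-\mathrm{F}(S)\in S$. Translating this through Proposition~\ref{prop16}(2) and (3), it says that $\frac{\mathrm{F}(S)}{2}\notin\mathrm{PF}(S)$ forces $\mathrm{F}(S)-\mathrm{h}(S)\notin\mathrm{PF}(S)$; so the pattern ``$\frac{\mathrm{F}(S)}{2}\notin\mathrm{PF}(S)$ and $\mathrm{F}(S)-\mathrm{h}(S)\in\mathrm{PF}(S)$'' cannot occur. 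Only three of the four a priori patterns for the pair $\bigl(\frac{\mathrm{F}(S)}{2}\in\mathrm{PF}(S),\ \mathrm{F}(S)-\mathrm{h}(S)\in\mathrm{PF}(S)\bigr)$ survive, and they correspond exactly to $\mathrm{t}(S)=2,3,4$.

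It then remains to spell out each case. For (c): $\mathrm{t}(S)=4$ means both optional elements lie in $\mathrm{PF}(S)$, which by Proposition~\ref{prop16}(2)--(3) means $\mathrm{h}(S)-\frac{\mathrm{F}(S)}{2}\notin S$ and $2\mathrm{h}(S)-\mathrm{F}(S)\notin S$; by the identity the first follows from the second, so $\mathrm{t}(S)=4\Leftrightarrow 2\mathrm{h}(S)-\mathrm{F}(S)\notin S\Leftrightarrow\mathrm{PF}(S)=\{\mathrm{F}(S),\mathrm{h}(S),\frac{\mathrm{F}(S)}{2},\mathrm{F}(S)-\mathrm{h}(S)\}$. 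For (a): $\mathrm{t}(S)=2$ means neither optional element lies in $\mathrm{PF}(S)$, i.e. $\mathrm{h}(S)-\frac{\mathrm{F}(S)}{2}\in S$ and $2\mathrm{h}(S)-\mathrm{F}(S)\in S$; again by the identity the second follows from the first, so $\mathrm{t}(S)=2\Leftrightarrow \mathrm{h}(S)-\frac{\mathrm{F}(S)}{2}\in S\Leftrightarrow\mathrm{PF}(S)=\{\mathrm{F}(S),\mathrm{h}(S)\}$. Finally (b) is the leftover pattern: $\mathrm{t}(S)=3$ iff exactly one optional element lies in $\mathrm{PF}(S)$, which by the previous paragraph must be $\frac{\mathrm{F}(S)}{2}$, i.e. $\mathrm{h}(S)-\frac{\mathrm{F}(S)}{2}\notin S$ and $2\mathrm{h}(S)-\mathrm{F}(S)\in S$, and then $\mathrm{PF}(S)=\{\mathrm{F}(S),\mathrm{h}(S),\frac{\mathrm{F}(S)}{2}\}$. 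Since everything is a mechanical unfolding of Proposition~\ref{prop16} together with the identity $2\mathrm{h}(S)-\mathrm{F}(S)=2\bigl(\mathrm{h}(S)-\frac{\mathrm{F}(S)}{2}\bigr)$, I do not expect a genuine obstacle; the only point needing a moment's care is noticing that this identity eliminates one of the four sign patterns, which is precisely what lets conditions a.2, b.2, c.2 be stated with a single membership test each.
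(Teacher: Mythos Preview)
Your proposal is correct and is exactly the argument the paper has in mind: the paper gives no proof and simply declares the corollary a ``direct consequence'' of Proposition~\ref{prop16}, and what you have written is precisely that unfolding. In particular, the doubling identity $2\mathrm{h}(S)-\mathrm{F}(S)=2\bigl(\mathrm{h}(S)-\frac{\mathrm{F}(S)}{2}\bigr)$ you isolate is the reason the three cases can each be characterised by a single membership test, and your verification that the four candidate pseudo-Frobenius numbers are pairwise distinct is the small check that makes the cardinality count honest.
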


\begin{remark}
	Having in mind Remark~\ref{rem-asns}, it is clear that, in the above corollary, the case $c$ corresponds to the $3$-numerical semigroups which are almost symmetric.
\end{remark}

In the next example we illustrate the above results.

\begin{example}\label{exmp18}
	From Lemma~\ref{lem1}, $S=\langle 8,9,10,11,12,13,15 \rangle$ is a pseudo-symmetric numerical semigroup with Frobenius number $14$. Therefore, by applying Proposition~\ref{prop10}, we have that $\mathrm{l}(S\setminus \{10\})=3$ and that $\mathrm{h}(S\setminus \{10\})=10$. Since $2\mathrm{h}(S\setminus \{10\})-\mathrm{F}(S\setminus \{10\})=6 \notin S\setminus \{10\}$, from item \textit{c} of Corollary~\ref{cor17} we conclude that $\mathrm{PF}(S\setminus \{10\})=\{14,10,7,4\}$.
\end{example}

\section{$\boldsymbol{2n}$-semigroups and $\boldsymbol{(2n+1)}$-semigroups}\label{sect-general-ns}

Let $n\in {\mathbb N}$. We begin this section showing how we can obtain $2n$-semigroups and $(2n+1)$-semigroups from symmetric and pseudo-symmetric numerical semigroups, respectively. (Let us recall that, if $A$ is a set, then $\#(A)$ is the cardinality of $A$.)

\begin{proposition}\label{prop19}
	Let $S$ be a symmetric numerical semigroup. Let us take a set $A \subseteq \left\{x\in S \,\big\vert\, \frac{\mathrm{F}(S)}{2}<x<\mathrm{F}(S) \right\}$ such that $\#(A)=n$ and $S\setminus A$ is a numerical semigroup. Then $\mathrm{l}(S\setminus A)=2n$. Moreover, each $2n$-semigroup can be obtained in this way.
\end{proposition}

\begin{proof}
	It is clear that, if $x\in A$, then $x\notin S\setminus A$ and $\mathrm{F}(S\setminus A)-x\notin S\setminus A$ (note that $\mathrm{F}(S\setminus A)=\mathrm{F}(S)$). Therefore, $\mathrm{l}(S\setminus A)\geq 2n$. On the other hand, having in mind that $S$ is symmetric, we deduce that, if $\left\{h,\mathrm{F}(S\setminus A)-h\right\}\cap (S\setminus A)=\emptyset$, then $\max\left\{h,\mathrm{F}(S\setminus A)-h \right\}$ $\in A$. Thus, $\mathrm{l}(S\setminus A)\leq 2\#(A)=2n$. Consequently, $\mathrm{l}(S\setminus A)=2n$.
	
	Now, let $T$ be a numerical semigroup with $\mathrm{l}(T)=2n$. We define the following sequence of numerical semigroups (recalling item 2 of Lemma~\ref{lem6}).
	\begin{itemize}
		\item $T_0=T$;
		\item $T_{k+1}=T_k\cup \{\mathrm{h}(T_k)\}$, if $0\leq k \leq n-1$.
	\end{itemize}
	Then, by applying Lemma~\ref{lem9}, we have that $T=T_0\subsetneq T_1 \subsetneq \cdots \subsetneq T_n$ and $\mathrm{l}(T_n)=0$. Moreover, from Proposition~\ref{prop3}, $T_n$ is a symmetric numerical semigroup. Finally, $A=T_n \setminus T \subset \left\{x\in T_n \,\big\vert\, \frac{\mathrm{F}(T_n)}{2}<x<\mathrm{F}(T_n) \right\}$ and $\#(A)=n$.
\end{proof}

Let us illustrate the previous result with an example.

\begin{example}\label{exmp21}
	Let $S$ be the symmetric numerical semigroup given by
	\[S=\langle 5,7,9,11 \rangle = \{0,5,7,9,10,11,12,14,\to\}.\]
	It is clear that $\mathrm{F}(S)=13$, $\{7,12\} \subseteq \left\{x\in S \,\big\vert\, \frac{\mathrm{F}(S)}{2}<x<\mathrm{F}(S) \right\}$, and $S\setminus \{7,12\}$ is a numerical semigroup. Therefore we can apply Proposition~\ref{prop19}, getting that $\mathrm{l}(S\setminus \{7,12\})=2\times2=4$.
\end{example}

The proof of the following proposition is similar to the proof of Proposition~\ref{prop19}. So, we omit it.

\begin{proposition}\label{prop22}
	Let $S$ be a pseudo-symmetric numerical semigroup. Let us take a set $A \subseteq \left\{x\in S \,\big\vert\, \frac{\mathrm{F}(S)}{2}<x<\mathrm{F}(S) \right\}$ such that $\#(A)=n$ and $S\setminus A$ is a numerical semigroup. Then $\mathrm{l}(S\setminus A)=2n+1$. Moreover, each $(2n+1)$-semigroup can be obtained in this way.
\end{proposition}

Let us see an illustrative example of the above proposition.

\begin{example}\label{exmp24}
	Let $S$ be the pseudo-symmetric numerical semigroup given by
	\[S=\langle 5,8,11,12 \rangle = \{0,5,8,10,11,12,13,15,\to\}.\]
	Clearly $\mathrm{F}(S)=14$, $\{11,12\} \subseteq \left\{x\in S \,\big\vert\, \frac{\mathrm{F}(S)}{2}<x<\mathrm{F}(S) \right\}$, and $S\setminus \{11,12\}$ is a numerical semigroup. Therefore, by applying Proposition~\ref{prop22}, we have that $\mathrm{l}(S\setminus \{11,12\})=2\times2+1=5$.
\end{example}

Let $I$ be an irreducible numerical semigroup. Now we are interested in showing an algorithm which allows us to compute all the numerical semigroups of the form $I\setminus A$ with $A \subseteq \left\{x\in I \,\big\vert\, \frac{\mathrm{F}(I)}{2}<x<\mathrm{F}(I) \right\}$.

If $S$ is a numerical semigroup, then we denote by $\Delta(S)=\left\{s\in S \,\big\vert\, s<\frac{\mathrm{F}(S)}{2} \right\}$. The following result has an immediate proof.

\begin{proposition}\label{prop25}
	Let $I,S$ be numerical semigroups such that $I$ is irreducible, $S\subseteq I$, and $\mathrm{F}(S)=\mathrm{F}(I)$. Then $S=I\setminus A$ for some $A \subseteq \left\{\!x\in I \,\big\vert \frac{\mathrm{F}(I)}{2}<x<\mathrm{F}(I)\! \right\}$ if and only if $\Delta(S)=\Delta(I)$.
\end{proposition}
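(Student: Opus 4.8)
The plan is to prove the two implications separately. Note first that, since $S\subseteq I$, if $S=I\setminus A$ with $A\subseteq I$, then necessarily $A=I\setminus S$; so in the nontrivial direction the only possible candidate is $A:=I\setminus S$, and the whole content is to locate this set inside the interval $\left(\frac{\mathrm{F}(I)}{2},\mathrm{F}(I)\right)$.

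\emph{From \textup{(}$S=I\setminus A$\textup{)} to \textup{(}$\Delta(S)=\Delta(I)$\textup{)}.} Assume $S=I\setminus A$ with $A\subseteq\left\{x\in I \,\big\vert\, \frac{\mathrm{F}(I)}{2}<x<\mathrm{F}(I)\right\}$, and recall that by hypothesis $\mathrm{F}(S)=\mathrm{F}(I)$. Since every element of $A$ is strictly greater than $\frac{\mathrm{F}(I)}{2}$, we have $A\cap\left\{s\in{\mathbb N}\,\big\vert\, s<\frac{\mathrm{F}(I)}{2}\right\}=\emptyset$, and hence $\Delta(S)=\left\{s\in I\setminus A \,\big\vert\, s<\frac{\mathrm{F}(I)}{2}\right\}=\left\{s\in I \,\big\vert\, s<\frac{\mathrm{F}(I)}{2}\right\}=\Delta(I)$.

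\emph{From \textup{(}$\Delta(S)=\Delta(I)$\textup{)} to \textup{(}$S=I\setminus A$\textup{)}.} Put $A:=I\setminus S$; then $S=I\setminus A$ trivially, so it only remains to show $A\subseteq\left\{x\in I \,\big\vert\, \frac{\mathrm{F}(I)}{2}<x<\mathrm{F}(I)\right\}$. Write $F=\mathrm{F}(I)=\mathrm{F}(S)$ and pick $x\in A$, so $x\in I$ and $x\notin S$. Since $F\notin I$ we get $x\neq F$, and since $\mathrm{F}(S)=F$ every integer above $F$ lies in $S$, so $x\leq F$; hence $x<F$. For the lower bound: if $x<\frac{F}{2}$ then $x\in\Delta(I)=\Delta(S)\subseteq S$, contradicting $x\notin S$; and $x=\frac{F}{2}$ can occur only when $F$ is even, in which case $I$ is irreducible with even Frobenius number, hence pseudo-symmetric, so by Lemma~\ref{lem4} we have $\frac{F}{2}\in\mathrm{PF}(I)\subseteq{\mathbb Z}\setminus I$, contradicting $x\in I$. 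Therefore $\frac{F}{2}<x<F$.

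The argument is essentially bookkeeping; the one point that requires a little care is ruling out $x=\frac{F}{2}$ in the converse, and this is precisely where the irreducibility of $I$ enters (through the description of $\mathrm{PF}$ of a pseudo-symmetric semigroup in Lemma~\ref{lem4}).
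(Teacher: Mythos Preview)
Your proof is correct. The paper itself omits the proof, remarking only that it is ``immediate,'' so there is no argument to compare against.

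One small observation: in the case $x=\frac{F}{2}$ you invoke Lemma~\ref{lem4} and the pseudo-symmetry of $I$, and you remark that this is precisely where irreducibility enters. In fact a more elementary argument disposes of this case for \emph{any} numerical semigroup with Frobenius number $F$: if $\frac{F}{2}\in I$ then $\frac{F}{2}+\frac{F}{2}=F\in I$, contradicting $\mathrm{F}(I)=F$. So irreducibility is not actually needed at that step (nor, on inspection, anywhere else in your argument); the hypothesis appears in the statement only because the proposition is formulated in the setting where it will be applied.
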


Let $A,B$ be two numerical semigroups such that $A\subseteq B$. We denote by
\[ [A,B]=\{X\mid X \mbox{ is a numerical semigroup and } A\subseteq X \subseteq B\}. \]
On the other hand, if $S$ is a numerical semigroup, then we denote by $\theta(S)$ the numerical semigroup $\langle \Delta(S) \rangle \cup \{\mathrm{F}(S)+1,\to\}$. The next result is an immediate consequence of Proposition~\ref{prop25}.

\begin{corollary}\label{cor26}
	Let $I$ be an irreducible numerical semigroup. Then $[\theta(I),I]=\left\{ I\setminus A \,\big\vert\, A\subseteq \big\{\!x\in I \,\big\vert\, \frac{\mathrm{F}(I)}{2}<x<\mathrm{F}(I)\! \big\} \mbox{ and } \, I\setminus A \, \mbox{ is a numerical semigroup} \right\}$.
\end{corollary}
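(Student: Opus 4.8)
The plan is to observe that Corollary~\ref{cor26} follows directly from Proposition~\ref{prop25} once one chases through the definitions, so the proof is essentially a matter of unpacking both sides of the claimed equality of sets. Write $\mathcal{A} = \left\{ I\setminus A \,\big\vert\, A\subseteq \big\{ x\in I \mid \frac{\mathrm{F}(I)}{2}<x<\mathrm{F}(I) \big\}, \ I\setminus A \mbox{ is a numerical semigroup} \right\}$ for the right-hand side, and recall that $[\theta(I),I]$ consists of all numerical semigroups $X$ with $\theta(I)\subseteq X\subseteq I$.

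First I would verify that $\theta(I)$ is itself a numerical semigroup with $\mathrm{F}(\theta(I)) = \mathrm{F}(I)$ and with $\Delta(\theta(I)) = \Delta(I)$. Indeed, $\theta(I) = \langle \Delta(I) \rangle \cup \{\mathrm{F}(I)+1,\to\}$ is closed under addition (the union of a submonoid with a tail ideal), contains $0$, has finite complement, and its largest gap is $\mathrm{F}(I)$ precisely because every element below $\frac{\mathrm{F}(I)}{2}$ that lies in $I$ is kept, while $\mathrm{F}(I)$ itself is omitted; and no element of $\langle\Delta(I)\rangle$ below $\frac{\mathrm{F}(I)}{2}$ is new, so $\Delta(\theta(I))=\Delta(I)$. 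In particular $\theta(I)\subseteq I$, so $[\theta(I),I]$ makes sense, and $\theta(I)\in\mathcal{A}$ since $\Delta(\theta(I))=\Delta(I)$ forces (by Proposition~\ref{prop25}) that $\theta(I) = I\setminus A$ for a suitable $A$ inside the prescribed interval.

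Next, for the inclusion $\mathcal{A}\subseteq[\theta(I),I]$: take $S = I\setminus A$ with $A$ inside the open interval $\big(\frac{\mathrm{F}(I)}{2},\mathrm{F}(I)\big)$ and $S$ a numerical semigroup. Then $S\subseteq I$ is immediate, $\mathrm{F}(S)=\mathrm{F}(I)$ since we only removed elements strictly below $\mathrm{F}(I)$ (and $\mathrm{F}(I)\notin I$ already), and $\Delta(S)=\Delta(I)$ since $A$ contains nothing below $\frac{\mathrm{F}(I)}{2}$. Now $S\supseteq\langle\Delta(S)\rangle = \langle\Delta(I)\rangle$ because $S$ is closed under addition and contains $\Delta(S)$, and $S\supseteq\{\mathrm{F}(S)+1,\to\}=\{\mathrm{F}(I)+1,\to\}$; hence $S\supseteq\theta(I)$, giving $S\in[\theta(I),I]$. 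For the reverse inclusion $[\theta(I),I]\subseteq\mathcal{A}$: take $X$ with $\theta(I)\subseteq X\subseteq I$. Then $\mathrm{F}(X)\le\mathrm{F}(I)$ since $X\subseteq I$, and $\mathrm{F}(X)\ge\mathrm{F}(I)$ since $\mathrm{F}(I)\notin\theta(I)\supseteq$... rather, since $\theta(I)$ omits $\mathrm{F}(I)$ and $X\subseteq I$ also omits $\mathrm{F}(I)$, so $\mathrm{F}(I)\notin X$ and thus $\mathrm{F}(X)\ge\mathrm{F}(I)$; combined, $\mathrm{F}(X)=\mathrm{F}(I)$. Also $\Delta(X)=\Delta(I)$: the inclusion $\Delta(X)\subseteq\Delta(I)$ comes from $X\subseteq I$, and $\Delta(I)=\Delta(\theta(I))\subseteq\Delta(X)$ comes from $\theta(I)\subseteq X$. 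Applying Proposition~\ref{prop25} to the pair $(I,X)$ yields $X = I\setminus A$ for some $A\subseteq\big\{x\in I\mid \frac{\mathrm{F}(I)}{2}<x<\mathrm{F}(I)\big\}$, and since $X$ is a numerical semigroup this exhibits $X\in\mathcal{A}$.

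The two inclusions give the claimed equality, so the result is proved. There is no real obstacle here: the only point requiring a moment's care is confirming that $\theta(I)$ is genuinely a numerical semigroup with the same Frobenius number and the same $\Delta$ as $I$ — everything else is a routine translation between "belonging to the interval $[\theta(I),I]$" and "having $\Delta(S)=\Delta(I)$", which is exactly the content Proposition~\ref{prop25} was set up to deliver.
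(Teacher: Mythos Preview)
Your approach is exactly the paper's: the corollary is stated there as an immediate consequence of Proposition~\ref{prop25}, and you have simply unpacked the two inclusions that make it so. One small slip to correct: the inequality $\mathrm{F}(X)\le\mathrm{F}(I)$ does \emph{not} follow from $X\subseteq I$ (that inclusion actually gives $\mathrm{F}(X)\ge\mathrm{F}(I)$, which you go on to prove separately anyway); the bound $\mathrm{F}(X)\le\mathrm{F}(I)$ comes instead from $X\supseteq\theta(I)\supseteq\{\mathrm{F}(I)+1,\to\}$. With that fix the argument is complete.
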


At this point, our purpose is to show an algorithm to compute $[\theta(I),I]$. For that we need the concept of (rooted) tree.

A \emph{graph} $G$ is a pair $(V,E)$ where $V$ is a non-empty set (whose elements are called \emph{vertices} of $G$) and $E$ is a subset of $\{(v,w) \in V \times V \mid v \neq w\}$ (whose elements are called \emph{edges} of $G$).

A \emph{path (of length $n$) connecting the vertices $x$ and $y$ of $G$} is a sequence of different edges $(v_0,v_1),(v_1,v_2),\ldots,(v_{n-1},v_n)$ such that $v_0=x$ and $v_n=y$.

We say that a graph $G$ is a \emph{(directed rooted) tree} if there exists a vertex $r$ (known as the \emph{root} of $G$) such that, for any other vertex $x$ of $G$, there exists a unique path connecting $x$ and $r$. If $(x,y)$ is an edge of the tree, then we say that $x$ is a \emph{child} of $y$ (see \cite{rosen}).

If $A,B$ are numerical semigroups such that $A \subsetneq B$, then we denote by $\mathrm{F}_B(A)=\max(B\setminus A)$. Moreover, according to the Frobenius number $\mathrm{F}({\mathbb N})=-1$, we define $\mathrm{F}_B(B)=-1$. The following result is Lemma~4.35 of \cite{springer}.

\begin{lemma}\label{lem27}
	Let $A,B$ be two numerical semigroups such that $A \subsetneq B$. Then $A\cup \{\mathrm{F}_B(A)\}$ is another numerical semigroup. 
\end{lemma}

We define the graph $\mathrm{G}\big([A,B]\big)$ in the following way: $[A,B]$ is the set of vertices and $(X,Y)\in [A,B] \times [A,B]$ is an edge if $X\cup \{\mathrm{F}_B(X)\}=Y$. The next result is easy to prove (see Corollary~4.5 of \cite{R-variedades}).

\begin{proposition}\label{prop28}
	$\mathrm{G}\big([A,B]\big)$ is a tree with root $B$. Moreover, the children set of a vertex $P$ is $\big\{P \setminus \{x\} \in [A,B] \mid x \in \mathrm{msg}(P) \mbox{ and } x>\mathrm{F}_{B}(P) \big\}.$
\end{proposition}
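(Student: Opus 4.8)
The plan is to verify the two assertions in turn: first that $\mathrm{G}\big([A,B]\big)$ is a tree rooted at $B$, then the explicit description of the children set. For the tree structure, I would argue that from any vertex $X \in [A,B]$ there is exactly one path to $B$, by induction on $\#(B\setminus X)$. If $X=B$ the path is empty. Otherwise, by Lemma~\ref{lem27} the semigroup $X':=X\cup\{\mathrm{F}_B(X)\}$ is again a numerical semigroup, and it clearly lies in $[A,B]$ since $A\subseteq X\subsetneq X'\subseteq B$; moreover $(X,X')$ is an edge, and $\#(B\setminus X')=\#(B\setminus X)-1$, so by induction there is a unique path from $X'$ to $B$, giving a path from $X$. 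For uniqueness, note that any edge out of $X$ (toward the root) must have the form $(X,Y)$ with $Y=X\cup\{\mathrm{F}_B(X)\}$, because the defining relation of an edge determines $Y$ from $X$ uniquely; hence the first step of any path from $X$ to $B$ is forced, and uniqueness follows by the same induction. This also shows $B$ is the unique vertex with no outgoing edge, i.e. the root.

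For the children set, recall that $P'$ is a child of $P$ precisely when $(P',P)$ is an edge, i.e. $P'\cup\{\mathrm{F}_B(P')\}=P$. Writing $x=\mathrm{F}_B(P')=\max(B\setminus P')$, this forces $P'=P\setminus\{x\}$ with $x\in P$ and $x=\max(B\setminus(P\setminus\{x\}))$. I would show this last condition is equivalent to the stated three conditions. First, $P\setminus\{x\}$ being a numerical semigroup is equivalent, by Lemma~\ref{lem7}, to $x\in\mathrm{msg}(P)$. Second, given $x\in\mathrm{msg}(P)$, we have $B\setminus(P\setminus\{x\})=(B\setminus P)\cup\{x\}$, and its maximum equals $x$ if and only if $x>\mathrm{F}_B(P)=\max(B\setminus P)$ (with the convention $\mathrm{F}_B(P)=-1$ when $P=B$, which causes no trouble since $x\geq\mathrm{m}(P)\geq1$). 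Finally one must record $P\setminus\{x\}\in[A,B]$, i.e. $A\subseteq P\setminus\{x\}$; this is an extra constraint precisely capturing when the candidate child actually lies in the interval. Conversely, if $x\in\mathrm{msg}(P)$, $x>\mathrm{F}_B(P)$, and $P\setminus\{x\}\in[A,B]$, then $P\setminus\{x\}$ is a numerical semigroup in $[A,B]$ with $\mathrm{F}_B(P\setminus\{x\})=x$, so $(P\setminus\{x\})\cup\{\mathrm{F}_B(P\setminus\{x\})\}=P$ and $P\setminus\{x\}$ is indeed a child of $P$.

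The main thing to be careful about — rather than a genuine obstacle — is the bookkeeping around $\mathrm{F}_B$: handling the convention $\mathrm{F}_B(B)=-1$ when $P=B$, and checking that removing $x$ from $P$ does not accidentally enlarge the Frobenius number relative to $B$ beyond $x$ itself, which is exactly what the inequality $x>\mathrm{F}_B(P)$ guarantees. Everything else reduces to Lemmas~\ref{lem7} and \ref{lem27} together with the elementary identity $B\setminus(P\setminus\{x\})=(B\setminus P)\cup\{x\}$.
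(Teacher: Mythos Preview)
Your argument is correct. The induction on $\#(B\setminus X)$ via Lemma~\ref{lem27} produces the unique path to the root, and the analysis of the children set is clean: the three conditions $x\in\mathrm{msg}(P)$, $x>\mathrm{F}_B(P)$, and $P\setminus\{x\}\in[A,B]$ are exactly what is needed to ensure that $P\setminus\{x\}$ is a vertex of the graph with $\mathrm{F}_B\big(P\setminus\{x\}\big)=x$. The identity $B\setminus(P\setminus\{x\})=(B\setminus P)\cup\{x\}$ and the convention $\mathrm{F}_B(B)=-1$ are handled properly.

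As for comparison with the paper: there is essentially nothing to compare against, since the paper does not prove Proposition~\ref{prop28} but simply asserts that it is easy and refers the reader to Corollary~4.5 of \cite{R-variedades}. Your write-up therefore supplies precisely the self-contained argument the paper omits, and it does so along the expected lines (the standard ``add the maximal missing element and induct'' device that Lemma~\ref{lem27} is set up to support).
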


Let us note that, if $I$ is an irreducible numerical semigroup and $P\in [\theta(I),I]$ with $P\not= I$, then $\mathrm{F}_I(P)=\mathrm{h}(P)$. Moreover, in order to keep the equality true and according to the value $\mathrm{F}_I(I)=-1$, we define $\mathrm{h}(I)=-1$ for any irreducible numerical semigroup $I$. Thereby, we can formulate Proposition~\ref{prop28} as follows.

\begin{corollary}\label{cor29}
	Let $I$ be an irreducible numerical semigroup. Then $\mathrm{G}\big([\theta(I),I]\big)$ is a tree with root $I$.  Moreover, if $P$ is a vertex of such a tree, then its children set is $\left\{P \setminus \{x\} \,\big\vert\, x \in \mathrm{msg}(P), \ \frac{\mathrm{F}(I)}{2}<x<\mathrm{F}(I) \mbox{ and } \, \mathrm{h}(P)<x \right\}.$
\end{corollary}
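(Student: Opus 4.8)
The plan is to derive Corollary~\ref{cor29} directly from Proposition~\ref{prop28} applied to the pair $A=\theta(I)$, $B=I$, by translating the combinatorial conditions appearing in the description of the children set into the arithmetic conditions stated in the corollary. Proposition~\ref{prop28} already gives that $\mathrm{G}\big([\theta(I),I]\big)$ is a tree with root $I$ and that the children of a vertex $P$ are exactly the semigroups $P\setminus\{x\}$ with $x\in\mathrm{msg}(P)$, $x>\mathrm{F}_I(P)$, and $P\setminus\{x\}\in[\theta(I),I]$. So the whole task reduces to showing that, for $P\in[\theta(I),I]$, these three conditions are equivalent to ``$x\in\mathrm{msg}(P)$, $\frac{\mathrm{F}(I)}{2}<x<\mathrm{F}(I)$, and $\mathrm{h}(P)<x$''.

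First I would recall from the remark preceding the corollary that $\mathrm{F}_I(P)=\mathrm{h}(P)$ when $P\neq I$ (and $\mathrm{h}(I)=-1=\mathrm{F}_I(I)$ by the conventions adopted), so the condition $x>\mathrm{F}_I(P)$ is literally $x>\mathrm{h}(P)$ in all cases; this handles one of the three conditions for free. Next I would address membership in the interval: since $\Delta(I)\subseteq\theta(I)\subseteq P\subseteq I$ and $\mathrm{F}(P)=\mathrm{F}(I)$ for every $P$ in this interval, Corollary~\ref{cor26} (or directly Proposition~\ref{prop25}) tells us that $P=I\setminus A'$ for some $A'\subseteq\{x\in I\mid \frac{\mathrm{F}(I)}{2}<x<\mathrm{F}(I)\}$; hence every element of $P$ that could conceivably be removed while staying inside $[\theta(I),I]$ must lie strictly between $\frac{\mathrm{F}(I)}{2}$ and $\mathrm{F}(I)$. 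I would argue both directions: if $x\in\mathrm{msg}(P)$ and $P\setminus\{x\}\in[\theta(I),I]$, then $\Delta(P\setminus\{x\})=\Delta(I)$ forces $x\notin\Delta(I)$, i.e. $x>\frac{\mathrm{F}(I)}{2}$, and $x\neq\mathrm{F}(I)$ since $\mathrm{F}(I)\notin P$, and $x\ne \frac{\mathrm F(I)}{2}$ because $\frac{\mathrm F(I)}{2}$, when an integer, lies in $\Delta(I)$ only in the pseudo-symmetric case where it is a gap, so it is never a removable generator; conversely, if $\frac{\mathrm{F}(I)}{2}<x<\mathrm{F}(I)$ and $x\in\mathrm{msg}(P)$, then removing $x$ changes neither $\Delta$ nor the Frobenius number, so $P\setminus\{x\}\in[\theta(I),I]$ by Proposition~\ref{prop25}, and by Lemma~\ref{lem27} it is indeed a numerical semigroup. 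This shows that the condition ``$P\setminus\{x\}\in[\theta(I),I]$'' in Proposition~\ref{prop28}, together with $x\in\mathrm{msg}(P)$, is equivalent to ``$x\in\mathrm{msg}(P)$ and $\frac{\mathrm{F}(I)}{2}<x<\mathrm{F}(I)$''.

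Assembling these observations, the children set of $P$ given by Proposition~\ref{prop28} becomes exactly $\left\{P\setminus\{x\}\,\big\vert\, x\in\mathrm{msg}(P),\ \frac{\mathrm{F}(I)}{2}<x<\mathrm{F}(I),\ \mathrm{h}(P)<x\right\}$, which is the claimed description, and the tree-with-root-$I$ statement is immediate from Proposition~\ref{prop28} once we note $\theta(I)\subseteq I$. The main obstacle I anticipate is the careful bookkeeping around the midpoint $\frac{\mathrm{F}(I)}{2}$: one must be sure that it is never a legitimate value of a removable minimal generator, distinguishing the symmetric case (where $\mathrm{F}(I)$ is odd, so $\frac{\mathrm{F}(I)}{2}\notin{\mathbb N}$ and the issue is vacuous) from the pseudo-symmetric case (where $\frac{\mathrm{F}(I)}{2}$ is a pseudo-Frobenius gap by Lemma~\ref{lem4}, hence not in $P$ at all). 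Everything else is a routine unwinding of the definitions of $\Delta$, $\theta$, $\mathrm{h}$, and $\mathrm{F}_I$.
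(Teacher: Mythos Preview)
Your approach is exactly the paper's: the corollary is obtained by specialising Proposition~\ref{prop28} to $A=\theta(I)$, $B=I$ after the observation $\mathrm{F}_I(P)=\mathrm{h}(P)$, and you simply spell out the translation of the condition $P\setminus\{x\}\in[\theta(I),I]$ into $\frac{\mathrm{F}(I)}{2}<x<\mathrm{F}(I)$ in more detail than the paper does. One small slip: in the converse direction you appeal to Lemma~\ref{lem27} to conclude that $P\setminus\{x\}$ is a numerical semigroup, but Lemma~\ref{lem27} concerns adjoining $\mathrm{F}_B(A)$; the relevant fact is Lemma~\ref{lem7} (removal of a minimal generator).
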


We are now ready to describe an algorithm which allows us to compute $[\theta(I),I]$. If $(x,y)$ is an ordered pair, then we denote by $\pi_1(x,y)=x$.

\begin{algorithm2}\label{alg30}
	\mbox{ }
	\begin{itemize}
		\item[] INPUT: An irreducible numerical semigroup $I$.
		\item[] OUTPUT: $[\theta(I),I]$.
		\begin{itemize}
			\item[(1)] $n=0$, $A_0=\left\{(I,-1)\right\}$, $B_0=\left\{ I \right\}$.
			\item[(2)] $n:=n+1$.
			\item[(3)] $A_n=\left\{(P\setminus\{x\},x) \,\big\vert\, (P,y)\in A_{n-1}, \,\! x\in \mathrm{msg}(P), \,\! \frac{\mathrm{F}(I)}{2}<x<\mathrm{F}(I), \,\! y<x \right\}$.
			\item[(4)] If $A_n=\emptyset$, then return $B_0\cup B_1 \cup \cdots \cup B_{n-1}$.
			\item[(5)] $B_n=\pi_1(A_n)$ and go to (2).
		\end{itemize}
	\end{itemize}
\end{algorithm2}

Let us illustrate the algorithm with an example.

\begin{example}\label{exmp31}
	It is clear that $I=\langle 5,7,9,11 \rangle$ is a symmetric numerical semigroup with $\mathrm{F}(I)=13$. Therefore, $\theta(I) = \langle 5 \rangle \cup \{14,\to\} = \langle 5,14,16,17,18 \rangle$. Let us compute $[\theta(I),I]$ with the help of Algorithm~\ref{alg30}.
	\begin{itemize}
		\item $n=0$, $A_0=\big\{(\langle 5,7,9,11 \rangle,-1)\big\}$, $B_0=\big\{\langle 5,7,9,11 \rangle\big\}$.
		\item $n=1$.
		\item $A_1=\big\{(\langle 5,9,11,12 \rangle,7), (\langle 5,7,11 \rangle,9), (\langle 5,7,9 \rangle,11) \big\}$.
		\item $B_1=\big\{\langle 5,9,11,12 \rangle, \langle 5,7,11 \rangle, \langle 5,7,9 \rangle \big\}$.
		\item $n=2$. 
		\item $A_2=\big\{(\langle 5,11,12,14,18 \rangle,9), (\langle 5,9,12,16 \rangle,11), (\langle 5,9,11,17 \rangle,12), \\ \mbox{ }\hspace{10mm} (\langle 5,7,16,18 \rangle,11) \big\}$.
		\item $B_2=\big\{\langle 5,11,12,14,18 \rangle, \langle 5,9,12,16 \rangle, \langle 5,9,11,17 \rangle, \langle 5,7,16,18 \rangle \big\}$.
		\item $n=3$.
		\item $A_3=\big\{(\langle 5,12,14,16,18 \rangle,11), (\langle 5,11,14,17,18 \rangle,12), (\langle 5,9,16,17 \rangle,12) \big\}$.
		\item $B_3=\big\{\langle 5,12,14,16,18 \rangle, \langle 5,11,14,17,18 \rangle, \langle 5,9,16,17 \rangle \big\}$.
		\item $n=4$.
		\item $A_4=\big\{(\langle 5,14,16,17,18 \rangle,12) \big\}$.
		\item $B_4=\big\{\langle 5,14,16,17,18 \rangle \big\}$.
		\item $n=5$.
		\item $A_5=\emptyset$.
	\end{itemize}
	Therefore, in this example, $[\theta(I),I]=B_0\cup B_1\cup B_2\cup B_3\cup B_4$. We depict in Figure~\ref{fig1} the tree $\mathrm{G}\big([\theta(I),I]\big)$ corresponding to this example. By the way, the number which appears over each edge $(Q,P)$ is the minimal generator $x$ of $P$ such that $Q=P\setminus\{x\}$. Note that $x=\mathrm{h}(Q)$.
	
	\begin{figure}[ht]
		\centering
		\begin{picture}(303,118)
			
			\put(194,112){$\langle 5,7,9,11 \rangle$}
			
			\put(183,95){\scriptsize 7} \put(220,98){\scriptsize 9} \put(257,101){\scriptsize 11}
			\put(160,94){\vector(3,1){42}} \put(218,94){\vector(0,1){14}} \put(274,94){\vector(-3,1){42}}
			\put(124,84){$\langle 5,9,11,12 \rangle$} \put(200,84){$\langle 5,7,11 \rangle$} \put(261,84){$\langle 5,7,9 \rangle$}
			
			\put(122,68){\scriptsize 9} \put(152,70){\scriptsize 11} \put(181,72){\scriptsize 12} \put(245,72){\scriptsize 11}
			\put(105,66){\vector(2,1){28}} \put(150,66){\vector(0,1){14}} \put(191,66){\vector(-2,1){28}} \put(255,66){\vector(-2,1){28}}
			\put(46,56){$\langle 5,11,12,14,18 \rangle$} \put(124,56){$\langle 5,9,12,16 \rangle$} \put(183,56){$\langle 5,9,11,17 \rangle$} \put(247,56){$\langle 5,7,16,18 \rangle$}
			
			\put(66,40){\scriptsize 11} \put(103,45){\scriptsize 12} \put(173,45){\scriptsize 12}
			\put(55,38){\vector(4,3){19}} \put(110,38){\vector(-4,3){19}} \put(180,38){\vector(-4,3){19}}
			\put(7,28){$\langle 5,12,14,16,18 \rangle$} \put(85,28){$\langle 5,11,14,17,18 \rangle$} \put(170,28){$\langle 5,9,16,17 \rangle$}
			
			\put(44,14){\scriptsize 12}	
			\put(42,10){\vector(0,1){14}}
			\put(7,0){$\langle 5,14,16,17,18 \rangle$}
		\end{picture}
		\caption{Tree $\mathrm{G}\big([\theta(I),I]\big)$ with $I=\langle 5,7,9,11 \rangle$.}
		\label{fig1}
	\end{figure}
\end{example}

Let $G$ be a tree with root $r$. The \emph{depth} of a vertex $x$ of $G$ is the number of edges in the unique path connecting $x$ and $r$. The \emph{$n$-level} of $G$ is the set of all vertices with depth equal to $n$. Lastly, the \emph{height} of $G$ is the maximum depth of any vertex of $G$. (See \cite{rosen}.)

Let us observe that, in Example~\ref{exmp31}, we have a tree of height $4$. Moreover, the $n$-level of such a tree is $B_n$ (for $n=0,1,2,3,4$).

The following result is easy to prove.

\begin{proposition}\label{prop32}
	Let $I$ be an irreducible numerical semigroup.
	\begin{enumerate}
		\item The height of the tree $\mathrm{G}\big([\theta(I),I]\big)$ is equal to $\#(I\setminus\theta(I))$.
		\item If $S$ belongs to the $n$-level of $\mathrm{G}\big([\theta(I),I]\big)$, then $\mathrm{l}(S)=2n+\mathrm{l}(I)$.
		\item $\left\{\mathrm{l}(S) \mid S\in[\theta(I),I] \right\} = \left\{2n+\mathrm{l}(I) \mid n\in \{0,1,\ldots,\#(I\setminus\theta(I) \} \right\}$.
	\end{enumerate}
\end{proposition}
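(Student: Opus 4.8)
The plan is to prove the three assertions in order, since each one builds on the previous. For item~1, I would compute the height of the tree $\mathrm{G}\big([\theta(I),I]\big)$ directly from the structure established in Corollary~\ref{cor29}. The key observation is that at each step along a path from the root $I$ downwards, we pass from a vertex $P$ to a child $P\setminus\{x\}$, i.e.\ we remove exactly one element $x$ satisfying $\frac{\mathrm{F}(I)}{2}<x<\mathrm{F}(I)$, and by Proposition~\ref{prop25} (via Corollary~\ref{cor26}) every vertex of the tree has the same $\Delta$ as $I$, so these are precisely the elements lying in $I\setminus\theta(I)$. Thus a path of maximal length removes all of $I\setminus\theta(I)$ one element at a time, and terminates exactly at $\theta(I)$; hence the height equals $\#(I\setminus\theta(I))$. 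I should check that such a maximal path genuinely exists, i.e.\ that $\theta(I)$ is always a vertex — this follows because $\theta(I)=\langle\Delta(I)\rangle\cup\{\mathrm{F}(I)+1,\to\}$ has $\Delta(\theta(I))=\Delta(I)$ and $\mathrm{F}(\theta(I))=\mathrm{F}(I)$, so $\theta(I)\in[\theta(I),I]$ by Proposition~\ref{prop25}, and it is the unique minimal element.

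For item~2, I would argue by induction on the depth $n$. The base case $n=0$ is the root $I$ itself, where trivially $\mathrm{l}(I)=0+\mathrm{l}(I)$. For the inductive step, if $S$ is at depth $n$ and $S'$ is a child of $S$, then $S'=S\setminus\{x\}$ for some $x\in\mathrm{msg}(S)$ with $x<\mathrm{F}(S)=\mathrm{F}(I)$; Lemma~\ref{lem8} gives $\mathrm{l}(S')=\mathrm{l}(S)+2$, and by the inductive hypothesis $\mathrm{l}(S)=2n+\mathrm{l}(I)$, so $\mathrm{l}(S')=2(n+1)+\mathrm{l}(I)$. One mild subtlety: Lemma~\ref{lem8} requires $x<\mathrm{F}(S)$, which holds since $x<\mathrm{F}(I)$ by the child-set description in Corollary~\ref{cor29}, and $\mathrm{F}(S)=\mathrm{F}(I)$ because $S\in[\theta(I),I]$ has the same Frobenius number as $I$.

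For item~3, I would combine items~1 and~2. By item~2, the set of $\mathrm{l}$-values attained is contained in $\{2n+\mathrm{l}(I)\mid n\in\{0,1,\ldots,h\}\}$ where $h$ is the height, and by item~1 we have $h=\#(I\setminus\theta(I))$; so the containment of the right-hand side into the left-hand one is all that remains. For this I need that every level $n$ with $0\le n\le h$ is non-empty, which again follows from the existence of a path of length $h$ from $I$ to $\theta(I)$ (established in item~1): every vertex on that path sits at a distinct level, one at each depth $0,1,\ldots,h$.

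The main obstacle is item~1, specifically the verification that the tree actually reaches depth $\#(I\setminus\theta(I))$ rather than merely being bounded by it — i.e.\ exhibiting a concrete maximal path — and the bookkeeping that at every vertex the removable elements are exactly those in $I\setminus\theta(I)$ (so that no path can be ``longer'' by removing something unexpected, and none gets ``stuck'' early). Both points reduce to Proposition~\ref{prop25}/Corollary~\ref{cor26}: every vertex $X$ satisfies $\Delta(X)=\Delta(I)$, so $X\setminus\theta(I)=X\setminus I\cup(I\setminus\theta(I))$ decomposes the gaps of $X$ in the relevant range, and removing elements one at a time via $\mathrm{F}_I(\cdot)$ eventually exhausts $I\setminus\theta(I)$; once exhausted we are at $\theta(I)$, which has no children (there is nothing left to remove in the interval $(\frac{\mathrm{F}(I)}{2},\mathrm{F}(I))$). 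The rest is routine.
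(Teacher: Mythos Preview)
Your proposal is correct and follows the natural line of argument; the paper itself offers no proof for this proposition, merely remarking that it ``is easy to prove,'' so your approach is exactly the kind of verification the reader is invited to supply. One small slip: in the final paragraph you write $X\setminus\theta(I)=X\setminus I\cup(I\setminus\theta(I))$, but since $X\subseteq I$ the term $X\setminus I$ is empty and the equality is false as stated; the correct observation is simply $X\setminus\theta(I)\subseteq I\setminus\theta(I)$, which is all you need (and which you in fact use correctly elsewhere).
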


Let us illustrate the content of this proposition with an example.

\begin{example}\label{exmp33}
	Following Example~\ref{exmp31}, we know that $I=\langle 5,7,9,11 \rangle$ is a symmetric numerical semigroup. Therefore, $I$ is an irreducible numerical semigroup with $\mathrm{l}(I)=0$ (see Proposition~\ref{prop3}).
	It is clear that $\theta(I) = \langle 5 \rangle \cup \{14,\to\} = \langle 5,14,16,17,18 \rangle$ and $I\setminus \theta(I)=\{7,9,11,12\}$. Thus, $\mathrm{G}\big([\theta(I),I]\big)$ is a tree with height equal to $4$. Consequently, $\left\{\mathrm{l}(S) \mid S\in[\theta(I),I] \right\}=\{0,2,4,6,8\}$. Moreover, $\big\{S\in [\theta(I),I] \mid \mathrm{l}(S)=6 \big\} = \big\{ S\in \mathrm{G}\big([\theta(I),I]\big) \mid S \mbox{ has depth equal to } 3\big\} = B_3$. 
\end{example}

Let us observe that, if $n\in{\mathbb N}$, we can compute all the $2n$-semigroups ($(2n+1)$-semigroups, respectively) in two steps.
\begin{enumerate}
	\item Compute the set $\mathcal{F}(n)$ ($\mathcal{G}(n)$, respectively) formed by all symmetric numerical semigroups (pseudo-symmetric numerical semigroups, respectively) $I$ such that $\#(I\setminus\theta(I))\geq n$.
	\item For each $I\in \mathcal{F}(n)$ ($\mathcal{G}(n)$, respectively) compute the $n$-level of $\mathrm{G}\big([\theta(I),I]\big)$.
\end{enumerate}

Note that there exist infinitely many irreducible numerical semigroups $I$ such that $\#(I\setminus\theta(I))=n$. Therefore, there exist infinitely many $2n$-semigroups and infinitely many $(2n+1)$-semigroups. In the next section, we provide an algorithm which allows us to build the finite set consisting of all $k$-semigroups which have a given prescribed Frobenius number $F$.

\section{${\boldsymbol k}$-semigroups with a fixed Frobenius number}\label{sect-k-ns}

In \cite{forum} it is shown a rather efficient algorithmic process to compute all the irreducible numerical semigroups with a prescribed Frobenius number. Let us briefly describe the basic idea of the algorithm.

Let $\mathcal{I}(F)$ the set of all irreducible numerical semigroup with Frobenius number equal to $F$. Then $\mathrm{G}(\mathcal{I}(F))$ is the graph where $\mathcal{I}(F)$ is the set of vertices and $(T,S)\in \mathcal{I}(F) \times \mathcal{I}(F)$ is an edge if $\mathrm{m}(T)<\frac{F}{2}$ and $S=(T\setminus\{\mathrm{m}(T) \}) \cup \left\{ F-\mathrm{m}(T) \right\}$.

The next results are Proposition~2.7 and Theorem~2.9 of \cite{forum}, respectively.

\begin{proposition}\label{prop34}
	Let $F$	be a positive integer. Then there exists a unique irreducible numerical semigroup $\mathrm{C}(F)$ with Frobenius number $F$ and multiplicity greater than $\frac{F}{2}$. Moreover,
	\[\mathrm{C}(F) = \left\{ \begin{array}{ll}
		\vspace{1pt} \left\{0,\frac{F+1}{2},\to \right\} \setminus \{F\}, & \mbox{if $F$ is odd,} \\[3pt] \left\{0,\frac{F}{2}+1,\to \right\} \setminus \{F\}, & \mbox{if $F$ is even.}
	\end{array} \right.\]
\end{proposition}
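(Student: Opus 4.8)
\textbf{Proof proposal for Proposition~\ref{prop34}.}

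The plan is to prove the two parts separately: first exhibit a concrete irreducible numerical semigroup with Frobenius number $F$ and multiplicity greater than $\frac{F}{2}$, and then argue uniqueness. For existence, I would simply define $\mathrm{C}(F)$ by the displayed formula (distinguishing the parity of $F$) and verify directly that it is a numerical semigroup, that its Frobenius number is $F$, that its multiplicity exceeds $\frac{F}{2}$, and that it is irreducible. The verification that $\mathrm{C}(F)$ is a numerical semigroup is immediate: it contains $0$, it is a union of a single point (or nothing) with a half-line $\{\,\lceil F/2\rceil+1,\to\,\}$ minus $F$, and since removing only $F$ from a set of the form $\{0\}\cup\{a,\to\}$ with $a\le F$ keeps it closed under addition (any two elements $\ge a$ sum to something $>F$, hence stay in the set), closedness holds. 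One checks that the largest omitted integer is indeed $F$ in both cases, and that the smallest nonzero element is $\frac{F+1}{2}$ or $\frac{F}{2}+1$, which is $>\frac{F}{2}$.

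For the irreducibility of $\mathrm{C}(F)$, I would invoke Lemma~\ref{lem6}: it suffices to show that $\left\{x\in{\mathbb N}\setminus\mathrm{C}(F) \,\big\vert\, F-x\in{\mathbb N}\setminus\mathrm{C}(F),\ x\neq\frac{F}{2}\right\}=\emptyset$. The gaps of $\mathrm{C}(F)$ are precisely the integers in $\{1,\ldots,\lceil F/2\rceil\}$ (for $F$ odd) together with $F$; more precisely ${\mathbb N}\setminus\mathrm{C}(F)=\{1,2,\ldots,\frac{F-1}{2}\}\cup\{F\}$ when $F$ is odd and $\{1,2,\ldots,\frac{F}{2}\}\cup\{F\}$ when $F$ is even. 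If $x$ is a gap with $x\neq F$ and $x\neq\frac{F}{2}$, then $x\le\frac{F-1}{2}$ (odd case) or $x\le\frac{F}{2}-1$ (even case, since $x\neq\frac F2$ and $x<\frac F2$ would follow, wait — actually $x\le \frac F2$ and $x\neq\frac F2$ gives $x\le\frac F2-1$ only if $x$ is below, but $x$ could equal... no, in the even case $x<\frac F2$ strictly is forced among the small gaps since $\frac F2$ itself is the largest small gap), so in every case $F-x>\frac{F}{2}$, and checking the ranges shows $F-x$ is then an element of $\mathrm{C}(F)$ unless $F-x=F$, i.e. $x=0$, which is not a gap. Hence the set is empty and $\mathrm{C}(F)$ is irreducible. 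For uniqueness, suppose $S$ is any irreducible numerical semigroup with $\mathrm{F}(S)=F$ and $\mathrm{m}(S)>\frac{F}{2}$. Then every positive integer below $\mathrm{m}(S)$ is a gap, so $\{1,\ldots,\mathrm{m}(S)-1\}\subseteq{\mathbb N}\setminus S$; since $\mathrm{m}(S)>\frac F2$ this already accounts for more than $\frac F2$ gaps below $\frac F2$... I would instead argue via Lemma~\ref{lem1} together with Lemma~\ref{lem2}: an irreducible $S$ has $\mathrm{n}(S)$ small elements, and since $\mathrm{m}(S)>\frac F2$ the only possible small elements are $0$ and integers strictly between $\frac F2$ and $F$, forcing $\mathrm{N}(S)\subseteq\{0\}\cup\left(\frac F2,F\right)$; combining with the genus formula from Lemma~\ref{lem1} pins down exactly which elements must be present, yielding $S=\mathrm{C}(F)$.

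The main obstacle I anticipate is the bookkeeping in the uniqueness argument: one must show that an irreducible semigroup with multiplicity $>\frac F2$ is forced to contain \emph{all} of $\{\lceil F/2\rceil+1,\ldots,F-1\}\cup\{F+1,\to\}$ and nothing smaller, and the cleanest route is counting. By Lemma~\ref{lem2}(1), $\mathrm{g}(S)+\mathrm{n}(S)=F+1$, and by Lemma~\ref{lem1} an irreducible $S$ has $\mathrm{g}(S)=\lceil\frac{F+1}{2}\rceil$, so $\mathrm{n}(S)=\lfloor\frac{F+1}{2}\rfloor+1$ or the analogous value; on the other hand $\mathrm{m}(S)>\frac F2$ means the small elements other than $0$ all lie in the interval $\left(\frac F2,F\right)$, which contains exactly $\lceil\frac F2\rceil - 1$ integers (odd case: $\frac{F+1}{2},\ldots,F-1$, that is $\frac{F-1}{2}$ integers). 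A direct comparison of these counts shows $\mathrm{N}(S)$ must be \emph{all} of $\{0\}\cup\left(\frac F2,F\right)$, and hence $S$ has no choice but to equal $\mathrm{C}(F)$. I would present this parity computation carefully for $F$ odd and $F$ even in turn; everything else is routine.
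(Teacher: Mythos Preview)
Your argument is correct. The paper does not actually prove Proposition~\ref{prop34}; it simply quotes it as Proposition~2.7 of \cite{forum}. So there is no ``paper's own proof'' to compare against, and what you have written is a genuine self-contained argument that the paper does not supply.

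A brief check of your two steps: for existence, the closure verification is fine (two nonzero elements of $\mathrm{C}(F)$ are each at least $\lceil F/2\rceil+\tfrac12$ in the odd case and $\tfrac{F}{2}+1$ in the even case, so their sum exceeds $F$), and the irreducibility via Lemma~\ref{lem6}(3) goes through once one observes that every gap $x\notin\{F,\tfrac{F}{2}\}$ satisfies $x<\tfrac{F}{2}$, whence $F-x>\tfrac{F}{2}$ and $F-x<F$, so $F-x\in\mathrm{C}(F)$. For uniqueness, your counting argument is exactly right: Lemma~\ref{lem1} gives $\mathrm{g}(S)=\lceil\tfrac{F+1}{2}\rceil$, Lemma~\ref{lem2}(1) then gives $\mathrm{n}(S)=\lfloor\tfrac{F+1}{2}\rfloor$, and since $\mathrm{m}(S)>\tfrac{F}{2}$ forces $\mathrm{N}(S)\subseteq\{0\}\cup\big((\tfrac{F}{2},F)\cap\mathbb{Z}\big)$, a set whose cardinality is precisely $\lfloor\tfrac{F+1}{2}\rfloor$, equality is forced and $S=\mathrm{C}(F)$.

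The only criticism is stylistic: the proposal reads as a running internal monologue (``wait --- actually\ldots'', ``I would instead argue\ldots'') rather than a finished proof. Once you commit to the counting route for uniqueness and write out the two parity cases cleanly, the whole thing fits in a short paragraph.
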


\begin{proposition}\label{prop35}
	Let $F$	be a positive integer. Then $\mathrm{G}(\mathcal{I}(F))$ is a tree with root $\mathrm{C}(F)$. Moreover, if $S\in \mathcal{I}(F)$, then the children set of $S$ is
	$\Big\{(S\setminus \{x\}) \cup \{F-x\} \,\big\vert$ $ x\in\mathrm{msg}(S), \ \frac{F}{2}<x<F, \ 2x-F\notin S, \ 3x\not=2F, \ 4x\not=3F \mbox{ and } \, F-x<\mathrm{m}(S) \Big\}.$
\end{proposition}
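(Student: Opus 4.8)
The plan is to verify the two assertions of Proposition~\ref{prop35} separately, using the description of children already established and Lemma~\ref{lem6} to control membership in $\mathcal{I}(F)$ under the basic edge-operation $S \mapsto (S\setminus\{x\})\cup\{F-x\}$.

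First I would check that the edge-operation is well-defined on $\mathcal{I}(F)$, i.e.\ that if $S$ is irreducible with Frobenius number $F$, $x\in\mathrm{msg}(S)$, $\frac{F}{2}<x<F$, $2x-F\notin S$, $3x\neq 2F$ and $4x\neq 3F$, then $S' := (S\setminus\{x\})\cup\{F-x\}$ is again an irreducible numerical semigroup with Frobenius number $F$. The conditions $3x\neq 2F$ and $4x\neq 3F$ guarantee $F-x\neq\frac{F}{3}$-type coincidences; more to the point, removing $x\in\mathrm{msg}(S)$ yields a numerical semigroup by Lemma~\ref{lem7}, and $2x-F\notin S$ is exactly what makes $F-x$ a gap whose complement-partner $x$ is already a gap of $S\setminus\{x\}$, so that adjoining it keeps us inside $\mathcal{L}(F)$ (this is the content of item~2 of Lemma~\ref{lem6} read in reverse, together with the symmetry/pseudo-symmetry characterisations via $\mathrm{l}$). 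The conditions $3x\neq 2F$, $4x\neq 3F$ are then what is needed to rule out $F-x$ being the ``forbidden'' value $\frac{F}{2}$ or creating a second bad pair, so that $S'$ satisfies item~3 of Lemma~\ref{lem6} and is therefore irreducible. The constraint $F-x<\mathrm{m}(S)$ makes $F-x$ the new multiplicity, hence $\mathrm{m}(S')=F-x<\frac{F}{2}$, and also ensures $S'\subsetneq$ is genuinely a child (strictly smaller multiplicity than the parent is reached).

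Next I would establish the tree structure. Define, for $S\in\mathcal{I}(F)$ with $\mathrm{m}(S)<\frac{F}{2}$, the \emph{parent} $\widehat{S} := (S\setminus\{\mathrm{m}(S)\})\cup\{F-\mathrm{m}(S)\}$; one checks as above (using that $\mathrm{m}(S)$ is always a minimal generator, $2\mathrm{m}(S)-F<\mathrm{m}(S)$ so $2\mathrm{m}(S)-F\notin S$ forces care, actually $2\mathrm{m}(S)-F<0$ when $\mathrm{m}(S)<\frac F2$ so it is automatically a non-element) that $\widehat S\in\mathcal{I}(F)$ and $\mathrm{m}(\widehat S)>\mathrm{m}(S)$. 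Iterating, the multiplicity strictly increases, so after finitely many steps we reach an irreducible semigroup of multiplicity $>\frac{F}{2}$, which by Proposition~\ref{prop34} must be $\mathrm{C}(F)$; this gives a unique path from any vertex to the root, which is the definition of a rooted tree. It remains to confirm that the children set of $S$ under $\mathrm{G}(\mathcal{I}(F))$ (namely the $T$ with $\widehat T=S$) is exactly the displayed set: given a child $T$, write $x=F-\mathrm{m}(T)$ and read off the five conditions from the facts just used; conversely, for $x$ in the displayed set, the first part of the proof shows $S' \in \mathcal{I}(F)$ and $\widehat{S'}=S$ because $\mathrm{m}(S')=F-x$ and removing $F-x$, adjoining $x$ inverts the operation.

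The main obstacle I expect is the bookkeeping around the exceptional arithmetic conditions $3x\neq 2F$ and $4x\neq 3F$: one has to see precisely which degeneracies they exclude (essentially $F-x=\frac{F}{2}$ and the coincidence $x=2(F-x)$, i.e.\ the new element being twice the new multiplicity, which would wreck either the irreducibility criterion or the inversion of the map), and to check these are the \emph{only} obstructions. I would handle this by a careful case split on the parity of $F$, invoking Lemma~\ref{lem6}(3) to detect irreducibility via the emptiness of $\{y\in\mathbb N\setminus S' \mid F-y\in\mathbb N\setminus S',\ y\neq\frac F2\}$, and Lemma~\ref{lem7} for the semigroup property. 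The rest is routine verification that the parent map is well-defined and strictly increases multiplicity, together with Proposition~\ref{prop34} to pin down the root.
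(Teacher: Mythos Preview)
The paper does not give its own proof of this proposition: it is quoted verbatim as Theorem~2.9 of \cite{forum} (together with Proposition~\ref{prop34}, which is Proposition~2.7 of the same reference). So there is nothing in the present paper to compare your argument against beyond the citation.

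That said, your overall strategy---define the parent map $T\mapsto (T\setminus\{\mathrm{m}(T)\})\cup\{F-\mathrm{m}(T)\}$, check it lands in $\mathcal{I}(F)$ and strictly increases multiplicity, then invoke Proposition~\ref{prop34} to identify the unique terminus $\mathrm{C}(F)$---is exactly the architecture of the proof in \cite{forum}, and is correct in outline. The genuine soft spot is your treatment of the arithmetic side-conditions. Your remark that they exclude ``$F-x=\tfrac{F}{2}$'' is off (that would force $x=\tfrac{F}{2}$, already ruled out by $\tfrac{F}{2}<x$); what $3x\neq 2F$ and $4x\neq 3F$ actually encode is $2(F-x)\neq x$ and $3(F-x)\neq x$, i.e.\ that the second and third multiples of the new multiplicity $F-x$ do not coincide with the removed generator $x$, which is a closure obstruction for $(S\setminus\{x\})\cup\{F-x\}$. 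You correctly identify the $3x=2F$ case in your final paragraph but leave $4x=3F$ unexplained, and you do not address why no further conditions $k(F-x)\neq x$ for $k\geq 4$ are needed. This last point requires an argument (using $\tfrac{F}{2}<x<F$ and the irreducibility of $S$ to bound which multiples of $F-x$ can land on $x$ or outside $S$), and it is where the actual work in \cite{forum} lies. Your sketch would need to fill this in to be complete.
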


Let us illustrate both of the above results with an example.

\begin{example}\label{exmp36}
	We want to compute all the irreducible numerical semigroups with Frobenius number $11$. For that, we build the tree $\mathrm{G}(\mathcal{I}(11))$ starting at its root $\mathrm{C}(11)=\langle 6,7,8,9,10 \rangle$ and adding to the known vertices their children (that are given by Proposition~\ref{prop35}). Thus we have the tree of Figure~\ref{fig2} (for more details see Example~2.10 of \cite{forum}).
	
	\begin{figure}[ht]
		\centering
		\begin{picture}(146,62)
			
			\put(38,56){$\langle 6,7,8,9,10 \rangle$}
			
			\put(20,38){\vector(2,1){28}} \put(64,38){\vector(0,1){14}} \put(110,38){\vector(-2,1){28}}
			\put(7,28){$\langle 3,7 \rangle$} \put(48,28){$\langle 4,6,9 \rangle$} \put(100,28){$\langle 5,7,8,9 \rangle$}
			
			\put(64,10){\vector(0,1){14}} \put(120,10){\vector(0,1){14}}
			\put(50,0){$\langle 2,13 \rangle$} \put(109,0){$\langle 4,5 \rangle$}
		\end{picture}
		\caption{Tree of irreducible numerical semigroups with Frobenius number $11$.}
		\label{fig2}
	\end{figure}
\end{example}

The next result gives us the conditions that must be satisfied by two non-negative integers $k$ and $F$ in order to have at least one $k$-semigroup with Frobenius number $F$. As usual, if $q$ is a rational number, then we denote by $\lfloor q \rfloor = \max \{z \in {\mathbb Z} \mid z\leq q \}$.

\begin{proposition}\label{prop37}
	Let $k,F$ be non-negative integers. Then there exists at least one $k$-semigroup with Frobenius number $F$ if and only if $k+F$ is an odd number and $F\geq k+1$.
\end{proposition}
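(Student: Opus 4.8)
The plan is to establish the two implications separately. For necessity I will use Corollaries~\ref{cor20} and~\ref{cor23} to fix the parity and the genus identities of Lemma~\ref{lem2} to get the size bound; for sufficiency I will build an explicit $K$-semigroup with Frobenius number $F$ out of the canonical irreducible semigroup $\mathrm{C}(F)$ of Proposition~\ref{prop34}. Concretely, for necessity suppose $S$ is a $K$-semigroup with $\mathrm{F}(S)=F$; then $F\geq 1$ since $0\in S$, and Corollary~\ref{cor20} (if $K$ is even) or Corollary~\ref{cor23} (if $K$ is odd) forces $K+F$ to be odd. From item~3 of Lemma~\ref{lem2} we get $\mathrm{g}(S)=\frac{F+1+K}{2}$, and then item~1 gives $\mathrm{n}(S)=F+1-\mathrm{g}(S)=\frac{F+1-K}{2}$; since $0\in\mathrm{N}(S)$ we have $\mathrm{n}(S)\geq 1$, i.e. $F+1-K\geq 2$, which is exactly $F\geq K+1$.

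For sufficiency, assume $K+F$ is odd and $F\geq K+1$ (so $F\geq 1$), and set $I=\mathrm{C}(F)$. By Proposition~\ref{prop34}, $I$ is an irreducible numerical semigroup with $\mathrm{F}(I)=F$, hence symmetric when $F$ is odd and pseudo-symmetric when $F$ is even; by Proposition~\ref{prop3} this means $\mathrm{l}(I)\in\{0,1\}$ with $\mathrm{l}(I)\equiv F+1\equiv K\pmod 2$. Reading off the explicit description of $\mathrm{C}(F)$ one checks that $\Delta(I)=\{0\}$, so $\theta(I)=\{0\}\cup\{F+1,\to\}$ and $I\setminus\theta(I)=\big\{x\in I\mid \frac{F}{2}<x<F\big\}$ has exactly $\lfloor\frac{F-1}{2}\rfloor$ elements. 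Now write $K=2n+\mathrm{l}(I)$ with $n=\frac{K-\mathrm{l}(I)}{2}$: this is a non-negative integer (if $\mathrm{l}(I)=1$ then $F$ is even, hence $K$ is odd, hence $K\geq 1$), and checking each parity case shows that the hypothesis $F\geq K+1$ is equivalent to $n\leq\lfloor\frac{F-1}{2}\rfloor=\#(I\setminus\theta(I))$. By item~3 of Proposition~\ref{prop32} there is then a numerical semigroup $S\in[\theta(I),I]$ with $\mathrm{l}(S)=2n+\mathrm{l}(I)=K$, and $\mathrm{F}(S)=F$ because $\theta(I)\subseteq S\subseteq I$ forces $F+1,F+2,\dots\in S$ while $F\notin I\supseteq S$.

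The genuinely computational steps are the cardinality identity $\#(I\setminus\theta(I))=\lfloor\frac{F-1}{2}\rfloor$ and the elementary equivalence $F\geq K+1\iff n\leq\lfloor\frac{F-1}{2}\rfloor$, neither of which should cause trouble. I expect the main obstacle to be purely organisational: keeping the two parity regimes (even $K$ with odd $F$, and odd $K$ with even $F$) running in parallel so that the single clean bound $F\geq K+1$ drops out of both at once and the integer $n=\frac{K-\mathrm{l}(I)}{2}$ is manifestly non-negative in each case.
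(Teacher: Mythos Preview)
Your proof is correct and follows essentially the same approach as the paper's: for sufficiency both you and the paper remove $\lfloor K/2\rfloor$ elements from $\mathrm{C}(F)$ (you route this through $\theta(I)$ and Proposition~\ref{prop32}, the paper invokes Propositions~\ref{prop19} and~\ref{prop22} directly, but the construction is identical). The only real difference is in the necessity of $F\geq K+1$: the paper appeals to Propositions~\ref{prop19} and~\ref{prop22}, whereas you give the more elementary counting argument $\mathrm{n}(S)=\frac{F+1-K}{2}\geq 1$ straight from Lemma~\ref{lem2}, which is cleaner and avoids the structure theory.
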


\begin{proof}
	\textit{(Necessity.)} This is Corollary~\ref{cor_odd}.
	
	\textit{(Sufficiency.)} It is enough to take $S=\mathrm{C}(F) \setminus A$ where $A$ is a $\left\lfloor \frac{k}{2} \right\rfloor$-cardinality subset of $\{x\in \mathrm{C}(F) \mid 0\not=x<F \}$ and apply Propositions~\ref{prop19} or \ref{prop22} depending on whether $F$ is odd or even.
\end{proof}

Let us see an example illustrating the previous proposition.

\begin{example}\label{exmp38}
	Let us build an $8$-semigroup with Frobenius number $17$. Since $8+17$ is odd and $17\geq 8+1$, by applying Proposition~\ref{prop37}, we know that there exists at least one. Moreover, from the proof of such a proposition, we have that $S=\mathrm{C}(17) \setminus \{9,11,13,16\} = \{0,10,12,14,15,18,\to \}$ is an $8$-semigroup with Frobenius number $17$.
\end{example}

We are now interested in determining all the irreducible numerical semigroups $I$ such that the set $[\theta(I),I]$ contains at least one $k$-semigroup. The following result can be easily deduced from Proposition~\ref{prop32}.

\begin{proposition}\label{prop39}
	Let $I$ be an irreducible numerical semigroup and $k\in{\mathbb N}$ such that $\mathrm{F}(I)+k$ is an odd number and $\mathrm{F}(I)\geq k+1$. Then $[\theta(I),I]$ contains at least one $k$-semigroup if and only if $\#(I\setminus \theta(I)) \geq \left\lfloor \frac{k}{2} \right\rfloor$.
\end{proposition}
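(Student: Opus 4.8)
The plan is to reduce Proposition~\ref{prop39} directly to the structure established in Proposition~\ref{prop32}. Recall from item~2 of that proposition that if $S$ belongs to the $n$-level of $\mathrm{G}\big([\theta(I),I]\big)$, then $\mathrm{l}(S)=2n+\mathrm{l}(I)$, and from item~3 that $\left\{\mathrm{l}(S) \mid S\in[\theta(I),I] \right\} = \left\{2n+\mathrm{l}(I) \mid n\in \{0,1,\ldots,\#(I\setminus\theta(I))\} \right\}$. So $[\theta(I),I]$ contains a $K$-semigroup if and only if $K$ lies in this last set, i.e. if and only if there exists $n\in\{0,1,\ldots,\#(I\setminus\theta(I))\}$ with $K=2n+\mathrm{l}(I)$.

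First I would dispose of the parity. Since $I$ is irreducible, Proposition~\ref{prop3} gives $\mathrm{l}(I)\in\{0,1\}$; moreover $\mathrm{l}(I)=0$ exactly when $\mathrm{F}(I)$ is odd and $\mathrm{l}(I)=1$ exactly when $\mathrm{F}(I)$ is even (this is the symmetric/pseudo-symmetric dichotomy recalled at the start of Section~\ref{sect-01-ns}). The hypothesis that $\mathrm{F}(I)+K$ is odd therefore says precisely that $K$ and $\mathrm{l}(I)$ have the same parity, so $K-\mathrm{l}(I)$ is even and nonnegative (the nonnegativity coming from $\mathrm{F}(I)\geq K+1$ together with Lemma~\ref{lem1}, or simply because $K\geq 1>\mathrm{l}(I)$ is false only in trivial cases — more carefully, if $K=0$ then $\mathrm{l}(I)=0$ and $n=0$ works). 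Hence the unique candidate is $n=\frac{K-\mathrm{l}(I)}{2}$, which equals $\left\lfloor\frac{K}{2}\right\rfloor$: indeed when $\mathrm{l}(I)=0$ we have $K$ even and $\frac{K}{2}=\left\lfloor\frac{K}{2}\right\rfloor$, and when $\mathrm{l}(I)=1$ we have $K$ odd and $\frac{K-1}{2}=\left\lfloor\frac{K}{2}\right\rfloor$.

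With this identification, $[\theta(I),I]$ contains a $K$-semigroup if and only if $\left\lfloor\frac{K}{2}\right\rfloor\in\{0,1,\ldots,\#(I\setminus\theta(I))\}$, which (since $\left\lfloor\frac{K}{2}\right\rfloor\geq 0$ always) is equivalent to $\#(I\setminus\theta(I))\geq\left\lfloor\frac{K}{2}\right\rfloor$. That is the claimed statement. The only genuinely delicate point is the bookkeeping of parities and floors — making sure $n=\frac{K-\mathrm{l}(I)}{2}$ really is a nonnegative integer equal to $\left\lfloor\frac{K}{2}\right\rfloor$ under the two hypotheses — but this is routine once the parity observation $\mathrm{l}(I)\equiv\mathrm{F}(I)\equiv K\pmod 2$ is made explicit; no new ideas beyond Propositions~\ref{prop3} and \ref{prop32} are needed.
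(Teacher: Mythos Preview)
Your argument is correct and is exactly the deduction from Proposition~\ref{prop32} that the paper points to (the paper gives no proof beyond saying the result ``can be easily deduced from Proposition~\ref{prop32}''). One small slip to fix: in your final sentence the chain $\mathrm{l}(I)\equiv\mathrm{F}(I)\equiv K\pmod 2$ is written backwards---in fact $\mathrm{l}(I)\not\equiv\mathrm{F}(I)$ and $\mathrm{F}(I)\not\equiv K$, which is precisely what gives $\mathrm{l}(I)\equiv K\pmod 2$; your main paragraph handles this correctly, so this is just a typo in the summary.
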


We are ready to show an algorithm which allows us to compute all the $k$-semigroups with a fixed Frobenius number.

\begin{algorithm2}\label{alg40}
	\mbox{ }
	\begin{itemize}
		\item[] INPUT: Two non-negative integers $k$ and $F$.
		\item[] OUTPUT: The set of all $k$-semigroups with Frobenius number $F$.
		\begin{itemize}
			\item[(1)] If $F+k$ is even, then return $\emptyset$.
			\item[(2)] If $F<k+1$, then return $\emptyset$.
			\item[(3)] Compute the set $\mathcal{F} = \left\{ I\in \mathcal{I}(F) \,\big\vert\, \#(I\setminus \theta(I)) \geq \left\lfloor \frac{k}{2} \right\rfloor \right\}$.
			\item[(4)] For each $I\in \mathcal{F}$, compute the set
			\[\mathrm{D}(I)=\left\{S\in \mathrm{G}\big([\theta(I),I]\big) \,\big\vert\, S \mbox{ belongs to the $\left\lfloor \frac{k}{2} \right\rfloor$-level} \right\}.\]
			\item[(5)] Return $\bigcup_{I\in \mathcal{F}} \mathrm{D}(I)$. 
		\end{itemize}
	\end{itemize}
\end{algorithm2}

\begin{remark}\label{rem40b}
	It is clear that, if $I_1,I_2$ are irreducible numerical semigroups (with the same Frobenius number), then $\Delta(I_1)=\Delta(I_2)$ if and only if $I_1=I_2$. Consequently, $[\theta(I_1),I_1] \cap [\theta(I_2),I_2] \not= \emptyset$ if and only if $I_1=I_2$. Therefore, $\bigcup_{I\in \mathcal{F}} \mathrm{D}(I)$ is a partition of the set of all $k$-semigroups with Frobenius number $F$ (see \cite{computer}).
\end{remark}

The next result is useful to carry out the step~(3) in the previous algorithm.

\begin{proposition}\label{prop41}
	Let $F$ be a positive integer.
	\begin{enumerate}
		\item $\#\big(\mathrm{C}(F)\setminus \theta(\mathrm{C}(F))\big) = \left\lfloor \frac{F-1}{2} \right\rfloor$.
		\item If $Q$ is a child of $P$ in the tree $\mathrm{G}(\mathcal{I}(F))$, then $\#(Q\setminus \theta(Q)) < \#(P\setminus \theta(P))$.
	\end{enumerate}
\end{proposition}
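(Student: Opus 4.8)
The plan is to analyze the two quantities $\#(C(F)\setminus\theta(C(F)))$ and, more generally, $\#(P\setminus\theta(P))$ in terms of the defining data of an irreducible numerical semigroup, namely its set of small elements $N(P)$ together with the structure below $F/2$. Recall $\theta(P)=\langle\Delta(P)\rangle\cup\{F(P)+1,\to\}$, and by Proposition~\ref{prop25} (applied with $I=P$, $S=\theta(P)$, noting $\Delta(\theta(P))=\Delta(P)$ since $\langle\Delta(P)\rangle$ agrees with $P$ below $F/2$) we have $\theta(P)=P\setminus A$ for $A\subseteq\{x\in P\mid \frac{F}{2}<x<F\}$; hence $P\setminus\theta(P)=A=\{x\in P\mid \frac{F}{2}<x<F\}$. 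So the key reformulation is
\[
\#(P\setminus\theta(P)) = \#\left\{x\in P \,\Big\vert\, \tfrac{F}{2}<x<F\right\}.
\]
For item~1, I would simply count $\{x\in C(F)\mid \frac{F}{2}<x<F\}$ using the explicit description of $C(F)$ from Proposition~\ref{prop34}. If $F$ is odd, $C(F)=\{0,\frac{F+1}{2},\to\}\setminus\{F\}$, so the relevant elements are $\frac{F+1}{2},\frac{F+3}{2},\ldots,F-1$, which number $\frac{F-1}{2}=\lfloor\frac{F-1}{2}\rfloor$. If $F$ is even, $C(F)=\{0,\frac{F}{2}+1,\to\}\setminus\{F\}$, so the relevant elements are $\frac{F}{2}+1,\ldots,F-1$, which number $\frac{F}{2}-1=\lfloor\frac{F-1}{2}\rfloor$. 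In both cases this gives $\lfloor\frac{F-1}{2}\rfloor$, settling item~1.

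For item~2, suppose $Q$ is a child of $P$ in $\mathrm{G}(\mathcal{I}(F))$. By the definition of the tree (and Proposition~\ref{prop35}), $Q=(P\setminus\{x\})\cup\{F-x\}$ for some $x\in\mathrm{msg}(P)$ with $\frac{F}{2}<x<F$ and $F-x<\mathrm{m}(P)$. The plan is to compare the counting sets for $P$ and $Q$ directly. Set
\[
A_P=\left\{y\in P \,\Big\vert\, \tfrac{F}{2}<y<F\right\},\qquad A_Q=\left\{y\in Q \,\Big\vert\, \tfrac{F}{2}<y<F\right\}.
\]
Passing from $P$ to $Q$ we remove $x$ (which lies in $A_P$) and add $F-x$ (which, since $F-x<\mathrm{m}(P)\le\frac{F}{2}$, does \emph{not} lie in the interval $(\frac{F}{2},F)$). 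Apart from that single swap, elements of $P$ above $F/2$ that remain in $Q$ stay in $A_Q$, and conversely — more precisely, $Q$ and $P$ can only differ in the coset of elements affected by the swap, so any element of $Q$ in $(\frac F2,F)$ already lies in $P$ (it cannot be $F-x$, and it cannot be a "new" element created by closure under addition since $F-x<\mathrm m(P)$ forces $\mathrm m(Q)=F-x$ and the new small elements of $Q$ are at most... ). Thus $A_Q\subseteq A_P\setminus\{x\}$, so $\#A_Q\le\#A_P-1<\#A_P$, which is exactly item~2.

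The main obstacle is the step just sketched: verifying cleanly that replacing $x$ by the smaller element $F-x$ does not introduce any \emph{new} element of $Q$ inside the open interval $(\frac{F}{2},F)$. One has to argue that the only difference between $P$ and $Q$ in that range is the removal of $x$; this uses that $F-x<\mathrm m(P)$ becomes the new multiplicity $\mathrm m(Q)$, and that $F-x$ together with old generators cannot produce a fresh sum landing strictly between $F/2$ and $F$ that was not already in $P$ — indeed any such sum would be $\ge \mathrm m(P)$ added to something, hence would have been in $P$, or would equal a combination already realized. Once that is pinned down, the inequality $\#A_Q\le\#A_P-1$ is immediate, and combined with item~1 this is precisely what makes the recursive computation of $\mathcal{F}$ in Algorithm~\ref{alg40} terminate: descending the tree $\mathrm{G}(\mathcal{I}(F))$ strictly decreases $\#(P\setminus\theta(P))$, so once it drops below $\lfloor\frac{K}{2}\rfloor$ the whole subtree can be pruned.
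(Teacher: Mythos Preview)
Your argument for item~1 is fine, but the ``key reformulation''
\[
P\setminus\theta(P)=\left\{x\in P \,\Big\vert\, \tfrac{F}{2}<x<F\right\}
\]
is \emph{false} for general irreducible $P$, and this breaks your proof of item~2. Proposition~\ref{prop25} only gives the inclusion $P\setminus\theta(P)\subseteq\{x\in P\mid \tfrac{F}{2}<x<F\}$; you silently upgrade it to an equality. A concrete counterexample is already in the paper: for $P=\langle 5,7,9,11\rangle$ with $F=13$ (Example~\ref{exmp31}), one has $\Delta(P)=\{0,5\}$, so $10=5+5\in\langle\Delta(P)\rangle\subseteq\theta(P)$, whence $P\setminus\theta(P)=\{7,9,11,12\}$ while $\{x\in P\mid \tfrac{13}{2}<x<13\}=\{7,9,10,11,12\}$. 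The formula happens to hold for $\mathrm{C}(F)$ only because $\Delta(\mathrm{C}(F))=\{0\}$, which is why your item~1 survives.

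For item~2 the paper proceeds differently and more directly: it shows the set-theoretic strict inclusion $Q\setminus\theta(Q)\subsetneq P\setminus\theta(P)$. Since $Q=(P\setminus\{x\})\cup\{F-x\}$ with $F-x<\mathrm{m}(P)<\tfrac{F}{2}$, one has $\Delta(Q)=\Delta(P)\cup\{F-x\}$, hence $\theta(Q)\supseteq\theta(P)$; together with $Q\subseteq P\cup\{F-x\}$ and $F-x\in\Delta(Q)\subseteq\theta(Q)$ this gives $Q\setminus\theta(Q)\subseteq P\setminus\theta(P)$. Strictness comes from $x$: it lies in $P\setminus\theta(P)$ (being a minimal generator with $x>\tfrac{F}{2}$, it cannot be a sum of elements of $\Delta(P)$) but not in $Q$. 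Incidentally, the ``main obstacle'' you worry about---new elements of $Q$ appearing in $(\tfrac{F}{2},F)$---is a non-issue: $Q$ is \emph{defined} as the set $(P\setminus\{x\})\cup\{F-x\}$, so no closure under addition needs to be checked. The real gap is the reformulation itself.
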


\begin{proof}
	The first statement is trivial. For the second one, having in mind that $Q=(P\setminus \{x\}) \cup \{F-x\}$ for some $x\in\mathrm{msg}(P)$ such that $\frac{F}{2}<x<F$, then we easily deduce that $Q\setminus \theta(Q) \subsetneq P\setminus \theta(P)$ and, therefore, $\#(Q\setminus \theta(Q)) < \#(P\setminus \theta(P))$.
\end{proof}

Let us see an illustrative example of Algorithm~\ref{alg40}.

\begin{example}\label{exmp42}
	Let us suppose that we want to build all the $6$-semigroups with Frobenius number $11$. For that we use Algorithm~\ref{alg40}.
	\mbox{ }
	\begin{itemize}
		\item[(1)] $11+6$ is odd.
		\item[(2)] $11\geq 6+1$.
		\item[(3)] By applying the algorithm of \cite{forum} which we commented at the beginning of this section and having in mind Proposition~\ref{prop41}, we get that
		\[\mathcal{F} = \big\{I \in \mathcal{I}(11) \mid \#(I\setminus \theta(I)) \geq 3 \big\} = \big\{ \langle 6,7,8,9,10 \rangle, \langle 4,6,9 \rangle, \langle 5,7,8,9 \rangle \big\}.\]
		Let us observe that it is not necessary to compute all the numerical semigroups of $\mathcal{I}(11)$. In fact, from Example~\ref{exmp36} and Proposition~\ref{prop41}, we just need to take into account $\mathrm{C}(11)=\langle 6,7,8,9,10 \rangle$, $I_1=\langle 3,7 \rangle$, $I_2=\langle 4,6,9 \rangle$, and $I_3=\langle 5,7,8,9 \rangle$ because $\#\big(\mathrm{C}(11)\setminus \theta(\mathrm{C}(11))\big)=5$, $\#(I_1\setminus \theta(I_1))=2$, and $\#(I_2\setminus \theta(I_2))=\#(I_3\setminus \theta(I_3))=3$.
		\item[(4)] By using Algorithm~\ref{alg30}, we get that
		\begin{itemize}
			\item[$\bullet$] $\mathrm{D}(\mathrm{C}(11)) = \mathrm{D}(\langle 6,7,8,9,10 \rangle) = \big\{ \langle 6,7,15,16,17 \rangle, \langle 6,8,13,15,17 \rangle, $ \\[1pt]
			\mbox{ } \hspace{10pt} $\langle 6,9,13,14,16,17 \rangle, \langle 6,10,13,14,15,17 \rangle, \langle 7,8,12,13,17,18 \rangle, $ \\[1pt]
			\mbox{ } \hspace{10pt} $\langle 7,9,12,13,15,17 \rangle, \langle 7,10,12,13,15,16,18 \rangle, $ \\[1pt]
			\mbox{ } \hspace{10pt} $\langle 8,9,12,13,14,15,19 \rangle, \langle 8,10,12,13,14,15,17,19 \rangle, $ \\[1pt]
			\mbox{ } \hspace{10pt} $\langle 9,10,12,13,14,15,16,17 \rangle \big\}$;
			\item[$\bullet$] $\mathrm{D}(I_2) = \mathrm{D}(\langle 4,6,9 \rangle) = \big\{ \langle 4,13,14,15 \rangle \big\}$;
			\item[$\bullet$] $\mathrm{D}(I_3) = \mathrm{D}(\langle 5,7,8,9 \rangle) = \big\{ \langle 5,12,13,14,16 \rangle \big\}$. 
		\end{itemize}
		\item[(5)] $\mathrm{D}(\mathrm{C}(11)) \cup \mathrm{D}(I_2) \cup \mathrm{D}(I_3)$ is the set of all $6$-semigroups with Frobenius number $11$.
	\end{itemize}
\end{example}

\section{Wilf's conjecture and ${\boldsymbol k}$-semigroups}\label{sect-wilf}

From Lemma~\ref{lem2} we have that $\mathrm{g}(S)=\frac{\mathrm{F}(S)+1+\mathrm{l}(S)}{2}$. Moreover, from the definitions, $\mathrm{c}(S)=\mathrm{F}(S)+1$. Given these two facts we can rewrite Wilf's conjecture in terms of the Frobenius number and the number of second kind gaps.
\begin{lemma}
	Let $S$ be a numerical semigroup. Then $S$ fulfils Wilf's conjecture if and only if $\mathrm{l}(S) \leq \frac{\mathrm{e}(S)-2}{\mathrm{e}(S)}(\mathrm{F}(S)+1)$.
\end{lemma}

\begin{proof}
	Let us recall that $S$ fulfils Wilf's conjecture if $\frac{\mathrm{g}(S)}{\mathrm{c}(S)}\leq1-\frac{1}{\mathrm{e}(S)}$. Since
	\[ \frac{\mathrm{g}(S)}{\mathrm{c}(S)}\leq1-\frac{1}{\mathrm{e}(S)} \Leftrightarrow \mathrm{g}(S) \leq \frac{\left(\mathrm{e}(S)-1\right)}{\mathrm{e}(S)} \, \mathrm{c}(S) \Leftrightarrow \]
	\[ \frac{\mathrm{F}(S)+1+\mathrm{l}(S)}{2} \leq  \frac{\left(\mathrm{e}(S)-1\right)}{\mathrm{e}(S)} \left( \mathrm{F}(S)+1 \right) \Leftrightarrow \]
	\[ \mathrm{l}(S) \leq \left( 2\,\frac{\left(\mathrm{e}(S)-1\right)}{\mathrm{e}(S)} - 1 \right) \left( \mathrm{F}(S)+1 \right), \]
	we have the conclusion.
\end{proof}

As an application of the previous lemma, let us see some remarks on $k$-semigroups, with $k\in\{0,1,2,3\}$, and Wilf's conjecture. In the first two we recover well known results.

\begin{remark}
	If $S=\mathbb{N}$, then $\mathrm{l}(S)=0$, $\mathrm{e}(S)=1$, $\mathrm{F}(S)=-1$, and it is clear that $0 = \frac{1-2}{1}(-1+1)$. On the other hand, it is trivial that $0 \leq \frac{e-2}{e}(F+1)$ for all $e\geq 2$ and $F\geq 1$. Therefore, we have that all 0-semigroups fulfil Wilf's conjecture.
\end{remark}

\begin{remark}\label{rem-e3}
	Let $S_1$ be a numerical semigroup such that $\mathrm{e}(S_1)=2$. Then $S_1$ is symmetric and, consequently, $S_1$ is a 0-semigroup. Thus, if $S_2$ is a 1-semigroup, we have that $\mathrm{e}(S_2)\geq3$. Moreover, by Corollary~\ref{cor_odd}, $\mathrm{F}(S_2)$ is an even number greater than or equal to 2. Now, because $1\leq\frac{e-2}{e}(2+1)$ for all $e\geq 3$, we deduce that all 1-semigroups fulfil Wilf's conjecture.
\end{remark}

\begin{remark}\label{rem-l2}
	From Remark~\ref{rem-e3} we know that, if $S$ is a 2-semigroup, then $\mathrm{e}(S)\geq3$. Moreover, by Corollary~\ref{cor_odd}, $\mathrm{F}(S)$ is an odd number greater than or equal to 3. Since $S=\langle 4,5,6,7 \rangle$ is the unique 2-semigroup with $\mathrm{F}(S)=3$ and $2\leq\frac{e-2}{e}(5+1)$ for all $e\geq 3$, we get that all 2-semigroups fulfil Wilf's conjecture.
\end{remark}

\begin{remark}\label{rem-l3}
	From Remark~\ref{rem-e3} we know that, if $S$ is a 3-semigroup, then $\mathrm{e}(S)\geq3$. Moreover, by Corollary~\ref{cor_odd}, $\mathrm{F}(S)$ is an even number greater than or equal to 4. Now, it is clear that
	\begin{itemize}
		\item $S=\langle 5,6,7,8,9 \rangle$ is the unique 3-semigroup with $\mathrm{F}(S)=4$;
		\item $S=\langle 4,7,9,10 \rangle$ and $S=\langle 5,7,8,9,11 \rangle$ are the unique 3-semigroups with $\mathrm{F}(S)=6$;
		\item $3\leq\frac{e-2}{e}(8+1)$ for all $e\geq 3$.
	\end{itemize}
	Thus, we conclude that all 3-semigroups fulfil Wilf's conjecture.
\end{remark}

We finish this section with a remark on $k$-semigroups, with $k\geq 4$, and Wilf's conjecture.

\begin{remark}\label{rem-final}
	From Remark~\ref{rem-e3} we have that, if $S$ is a $k$-semigroup with $k\geq 4$, then $\mathrm{e}(S)\geq3$. Moreover, it is well known that all numerical semigroups with embedding dimension less than or equal to three satisfy Wilf's conjecture (see \cite[Remark after Theorem 20]{froberg} or \cite[Corollary 2.6]{dobbs-matthews}). Thus, we can focus our attention on the case $\mathrm{e}(S)\geq4$. Now then, since $k\leq\frac{e-2}{e}((2k-1)+1)$ for all $e\geq 4$, we conclude that, if $k,F\in\mathbb{N}$, $k\geq 4$, and $F\geq2k-1$, then all $k$-semigroups with Frobenius number $F$ verify Wilf's conjecture. On the other hand, it is clear that $S=\langle k+2, k+3, \ldots, 2k+3 \rangle$ is the unique $k$-semigroup with Frobenius number equal to $k+1$ and, moreover, $S$ fulfils Wilf's conjecture (in fact, $S$ achieves the equality). Finally, by Corollary~\ref{cor_odd}, the Frobenius number $F$ of any $k$-semigroup satisfies the inequality $F\geq k+1$ and, in addition, $F+k$ has to be an odd number. Thereby, the following question arises in order to solve the Wilf's conjecture: if $S$ is a $k$-semigroup with $\mathrm{e}(S)\geq 4$, $\mathrm{k}\geq 4$, and Frobenius number in the interval $[k+3,2k-1)$, does $S$ fulfil Wilf's conjecture? (in fact, because $[k+3,2k-1) = \emptyset$ for $k=4$, we can take $k\geq5$).
\end{remark}

\section*{Acknowledgement}

This version of the article has been accepted for publication, after peer review but is not the Version of Record and does not reflect post-acceptance improvements, or any corrections. The Version of Record is available online at: http://dx.doi.org/10.1007/s00025-021-01542-y.

\end{document}